\documentclass[sn-mathphys-num]{sn-jnl}
\usepackage{amsmath,amsthm,amssymb,amsfonts}
\usepackage{mathtools} 
\usepackage{bm}
\usepackage{mathrsfs} 
\usepackage{changes} 
\usepackage{url} 
\usepackage{enumitem}
\usepackage[activate={true,nocompatibility},final,tracking=true,kerning=true,
spacing=true,factor=1100,stretch=10,shrink=10]{microtype} 
\usepackage[utf8]{inputenc}
\usepackage[english]{babel}


\usepackage{graphicx}
\usepackage{longtable} 
\usepackage{caption}
\usepackage{tikz}
\usepackage{subfig}
\usepackage{pgfplots}
\usepackage{color}
\usepackage{array}
\usepackage{booktabs}
\usepackage{hyperref}

\usepackage{amsmath}

\usepackage{algorithmic}
\usepackage[ruled]{algorithm2e}
\usepackage{setspace}
\SetAlCapSkip{0.5em}
\SetKwInOut{Input}{Input}
\SetKwInOut{Output}{Output\,}
\SetKwInOut{Data}{Data}
\SetKwProg{Tree}{Tree}{}{EndTree}


\newcommand{\N}{\mathbb{N}}

\newcommand{\R}{\mathbb{R}}

\newcommand{\cC}{\mathcal{C}}
\newcommand{\cA}{\mathcal{A}}
\newcommand{\cM}{\mathcal{M}}

\newcommand{\inner}[2]{\langle #1,#2\rangle} 

\newcommand{\norm}[1]{\|{#1}\|}

\newcommand{\abs}[1]{|#1|}

%
%


\newcommand{\tu}{{\tilde u}}
\newcommand{\DD}{\mathcal{D}}
\newcommand{\HH}{{\mathcal{H}}} 

%

%
\newcommand{\wto}{\rightharpoonup}

\newcommand{\parcomp}{\triangleright}
\newcommand{\mgap}{\vspace{.1in}}

\newcommand{\defeq}{\stackrel{\text{def}}{=}} 

\newcommand{\set}[1]{\{#1\}}

\newcommand{\seq}[1]{\left\{#1\right\}}

\newcommand{\lab}[1]{\label{#1}}

\DeclareMathOperator{\Dom}{Dom}

\DeclareMathOperator{\Gra}{Gra}
\DeclareMathOperator{\dom}{dom}

\DeclareMathOperator{\prox}{prox}

\DeclareMathOperator{\Sign}{Sign}
\DeclareMathOperator{\epi}{epi}

\newtheorem{theorem}{Theorem}[section]
\newtheorem{lemma}[theorem]{Lemma}

\newtheorem{proposition}[theorem]{Proposition}
\newtheorem{remark}[theorem]{Remark}
\newtheorem{definition}[theorem]{Definition}

\definechangesauthor[name={M. Marques Alves}, color=blue]{M}
\definechangesauthor[name={Dirk}, color=green!50!black]{D}

\title{
  A general framework for inexact splitting algorithms with relative errors and applications to Chambolle-Pock and Davis-Yin methods
}
\author[1]{\fnm{M.} \sur{Marques Alves}}\email{maicon.alves@ufsc.br}

\author[2]{\fnm{Dirk A.} \sur{Lorenz}}\email{d.lorenz@uni-bremen.de}

\author[3]{\fnm{Emanuele} \sur{Naldi$^*$}}\email{emanuele.naldi@edu.unige.it}

\affil[1]{Department of Mathematics,
Federal University of Santa Catarina,
88040-900 Florian\'opolis, Brazil}
\affil[2]{Faculty 3 - Mathematics and Computer Science, Center for Industrial Mathematics, University of Bremen, Postfach 330440, 28334 Bremen,
  Germany}
\affil[3]{MaLGa - DIMA, University of Genoa, 16146 Genoa, Italy}

\begin{document}

\abstract{%
In this work we apply the recently introduced framework of degenerate preconditioned proximal point algorithms to the hybrid proximal extragradient (HPE) method for maximal monotone inclusions. The latter is a method that allows inexact proximal (or resolvent) steps where the error is controlled by a relative-error criterion. Recently the HPE framework has been extended to the Douglas-Rachford method by Eckstein and Yao. In this paper we further extend the applicability of the HPE framework to splitting methods. To this end we use the framework of degenerate preconditioners that allows to write a large class of splitting methods as preconditioned proximal point algorithms. In this way, we modify many splitting methods such that one or more of the resolvents can be computed inexactly with an error that is controlled by an adaptive criterion. Further, we illustrate the algorithmic framework in the case of Chambolle-Pock's primal dual hybrid gradient method and the Davis-Yin's forward Douglas-Rachford method. In both cases, the inexact computation of the resolvent shows clear advantages in computing time and accuracy.
}

\pacs[MSC Classification]{49M27, 65K10, 47H05}

\keywords{Splitting methods, maximal monotone operator, degenerate preconditioner, proximal method, inexact chambolle-pock, inexact davis-yin}

\maketitle

\section{Introduction}
\label{sec:intro}

In this paper we consider maximal monotone inclusions in a real Hilbert space $\HH$, i.e. inclusions of the type 
\begin{align}
\label{eq:prob}
0\in \cA u
\end{align}
where $\cA:\HH\to 2^{\HH}$ is a maximal monotone operator. 
Our goal is to find solutions to these inclusions, i.e. zeros of the operator $\cA$. These problems have a long history and many other problems are covered by this general formulation, e.g. convex optimization problems, convex-concave saddle-point problems 
and variational inequalities (see, e.g, \cite{BCombettes}), but also split inclusions of the 
type $0\in (A_{1} + \cdots + A_{N})x$, involving many maximal monotone operators, can be rewritten as an inclusion with a single operator $\cA$ in a larger space, see~\cite{BCLN22}.

A famous meta algorithm for monotone inclusions is the proximal point method~\cite{rockafellar1976} 
 that iterates the resolvent $J_{\cA} = (I+\cA)^{-1}$, i.e. it computes the sequence defined by $u^{k+1} = J_{\cA} u^{k}$, for an initialization $u^{0}$.
Resolvents are not always simple to compute, and if this is the case, the hybrid proximal extragradient (HPE) method~\cite{solodov1999} provides a flexible framework to compute inexact resolvents with a relative error-criterion. The goal of the paper is to extend the HPE framework to the so-called preconditioned proximal point method with degenerate preconditoner as it has been proposed in~\cite{BCLN22}. It has been shown in~\cite{BCLN22} that this framework allows a concise treatment of many existing splitting methods and we will extend the HPE framework to that case.

We will show that this allows us to develop splitting methods with inexact evaluation of resolvents where the errors are controlled by a relative-error criterion. Applying the method to the case of the Douglas-Rachford splitting operator we will obtain exactly the method recently proposed by Eckstein and Yao in~\cite{eckstein2018}. To show the flexibility of the method we also derive inexact versions of the Chambolle-Pock method~\cite{chambolle2011} and the Davis-Yin (forward Douglas-Rachford) method~\cite{davis2017}.

More precisely, as an application of our general degenerate preconditioned HPE framework for solving \eqref{eq:prob}, we consider the more structured inclusions
\begin{align} \lab{eq:prob02}
0\in A_1x + A_2x,
\end{align}
on a real Hilbert space $H$, a version of \eqref{eq:prob02} allowing compositions with linear operators
\begin{align} \lab{eq:prob03}
0\in A_1x + K^*A_2Kx,
\end{align}
as well as a three-operator case involving an additional cocoercive operator $B \colon H\to H$, namely
\begin{align} \lab{eq:prob04}
0\in A_1x +A_2x + Bx,
\end{align}
where in all instances $A_1$ and $A_2$ denote set-valued maximal monotone operators, $K$ is a bounded linear operator and
$B$ is a point-to-point cocoercive operator; more details below in Subsection \ref{sec:notation}. 
For instance, problem \eqref{eq:prob03} can be written in the format of \eqref{eq:prob} using $u = (x, y)\in H^2=\HH$
and $\cA:\HH\to 2^\HH$ defined by $\cA u = \left(A_1x + K^*y, -Kx + A_2^{-1}y\right)$ or, more shortly, 
\begin{align}\lab{def:T}
 \cA: =
 \begin{bmatrix}
 \phantom{-}A_1 & K^* \\
-K  & A_2^{-1}
\end{bmatrix}.
\end{align}
Using a similar reasoning, one can also recast \eqref{eq:prob04} into the general framework of monotone inclusions for a single maximal monotone operator (see \cite{BCLN22}).

More important than the fact that \eqref{eq:prob02} -- \eqref{eq:prob04} are special instances of \eqref{eq:prob} is the fact that we are able to explicitly define (degenerate) preconditioners allowing us to design and analyze practical (relative-error) inexact operator splitting algorithms for solving the above structured inclusions. As we mentioned before, we will propose and study the asymptotic convergence of a degenerate preconditioned HPE method, combining ideas from the HPE theory and the more recent degenerate preconditioned proximal point algorithm~\cite{BCLN22}.

\paragraph{The hybrid proximal extragradient (HPE) method.} 
In the seminal paper~\cite{rockafellar1976}, Rockafellar showed that
if at the current iterate $u^k$ the next one, namely $u^{k+1}$, is computed satisfying the \emph{summability} condition
\begin{align}\label{eq:inexact.prox}
 \sum_{k = 0}^\infty\,\left\|u^{k+1} - (I +\lambda_k \cA)^{-1}u^k\right\|< \infty
\end{align}
and $\seq{\lambda_k}$ is bounded away from zero, then the sequence of approximations $\seq{u^k}$ converges (weakly) 
to a solution of \eqref{eq:prob} (assuming that there exists at least one).
As an alternative to \eqref{eq:inexact.prox}, some modern inexact versions of the proximal point algorithm employ \emph{relative-error tolerances} for solving subproblems, namely to compute 
$u^{k+1}\approx (I +\lambda_k \cA)^{-1}u^k$.
The first methods of this type were proposed by Solodov and Svaiter in~\cite{solodov1999b, solodov2001, solodov1999} and subsequently studied, e.g., in~\cite{alves2016, monteiro2010, monteiro2012, monteiro2013}.
The main idea consists in decoupling the exact proximal point iteration 
$u^{k+1} = (I + \lambda_k \cA)^{-1}u^k$ as
\begin{align} \label{eq:dec.prox}
  v^{k+1} \in \cA u^{k+1},\quad  \lambda_k v^{k+1} + u^{k+1} - u^k = 0,
\end{align}
and then relaxing \eqref{eq:dec.prox}
within relative-error tolerance criteria.
The hybrid proximal extragradient (HPE) method \cite{solodov1999} has been shown to be
very effective as a framework for the design and analysis
of many concrete algorithms and can be described (in its simplest form, without $\varepsilon$-enlargements) as follows: 
for all $k\geq 0$,
\begin{align}
\left\{
\begin{array}{ll}
  \label{eq:v.hpe}
  v^{k+1}\in \cA \tu^{k+1},\quad  
 \norm{\lambda_k v^{k+1} + \tu^{k+1} - u^k} \leq \sigma \norm{\tu^{k+1} - u^k},\\[3mm]
  u^{k+1} = u^k - \lambda_k v^{k+1},\\
        \end{array}
        \right.
\end{align}
where $\sigma\in [0, 1)$.
If $\sigma=0$, then it is easy to see that \eqref{eq:v.hpe} reduces to the exact proximal point method
\eqref{eq:dec.prox}.
We also mention that the update rule for $u^{k+1}$ as in \eqref{eq:v.hpe}, 
namely, $u^{k+1} = u^k - \lambda_k v^{k+1}$, is exactly what we mean by an \emph{extragradient step} (this goes back to the seminal work of 
Korpolevich~\cite{korpelevich1976}, see also~\cite{nemirovski2005}).

As it was already mentioned, the main contribution of this paper is to propose and study a
degenerate preconditioned version of the HPE method \eqref{eq:v.hpe}.

\paragraph{The degenerate preconditioned proximal point method.} 
In~\cite{BCLN22}, the authors introduced a generalization of the proximal point method for solving \eqref{eq:prob} as follows:
for all $k\geq 0$,
\begin{align} \lab{eq:prepp}
u^{k+1} = (\cM+\cA)^{-1}\cM u^k,
\end{align}
where $\cM$ is a self-adjoint and positive \emph{semidefinite} bounded linear operator in $\HH$. By taking $\cM = I$ in \eqref{eq:prepp} we recover the proximal point iteration $u^{k+1} = (I + \cA)^{-1}u^k$
with $\lambda_k\equiv 1$.
In the case of positive definite $\cM$, the iteration~\eqref{eq:prepp} can be written as 
\begin{align*}
  u^{k+1} = (I + \cM^{-1}\cA)^{-1}u^{k}
\end{align*}
and hence, is exactly a proximal point iteration in the space where the inner product is changed to the $\cM$-inner product
\begin{align}
\label{eq:m-inner-product}
\inner{x}{y}_{\cM}\defeq \inner{x}{\cM y}.
\end{align}
And since $\cM^{-1}\cA$ is maximal monotone in the Hilbert space with the $\cM$-inner product, convergence of the method follows from standard results on the proximal point method.
The main point of the contribution~\cite{BCLN22} was to include in their analysis the case in which $\cM$ may have a nontrivial kernel, in this way opening the possibility of designing new operator-splitting methods. The asymptotic analysis of
\eqref{eq:prepp} was carried out by assuming that $(\cM+\cA)^{-1}$ is a (point-to-point) Lipschitz continuous operator in $\HH$. This assumption is satisfied by interesting examples of $\cM$ and $\cA$ in operator-splitting problems; for instance, for
$\cA$ as in \eqref{def:T}, considering
\begin{align}\lab{eq:def.m}
 \cM: =
 \begin{bmatrix}
 \frac{1}{\theta}I & -K^* \\
-K  & \frac{1}{\tau}I
\end{bmatrix}
,
\end{align}
(with $0<\theta\tau\leq \norm{K}^{-2}$), the generalized proximal point method \eqref{eq:prepp} leads to the primal-dual hybrid gradient method by Chambolle-Pock~\cite{chambolle2011} and the convergence follows (also in the edge case $\theta\tau=\norm{K}^{-2}$) by the theory developed in~\cite{BCLN22}.

\paragraph{The reduced method.}
The framework from~\cite{BCLN22} allows for degenerate preconditioners $\cM$, i.e. the operator $\cM$ is only positive semidefinite and can have a non-trivial kernel. This gives more flexibility but also  leads to so-called reduced methods which we illustrate here with an example.

In the case $K=I$ and $\tau=\theta=1$ in \eqref{def:T} and \eqref{eq:def.m} the involved operators become 
\begin{align*}
\cA =
  \begin{bmatrix}
    A_1 & I\\
    -I & A_2^{-1}
  \end{bmatrix}, \quad \cM =
  \begin{bmatrix}
    I & -I\\
    -I & I
  \end{bmatrix}.
\end{align*}
Here, the preconditioner $\cM$ has a large kernel, namely the span of the vectors of the form $(w,\ w)$. As has been shown in~\cite{BCLN22}, this can be used to derive a ``reduced'' algorithm: Instead of iterating $u^{k}\in \HH^{2}$ one can use the decomposition $\cM = \cC\cC^{*}$ with $\cC:\HH\to\HH^{2}$, $\cC w = (w,\ -w)$, introduce $w^k = \cC^* u^k$ for every $k$, and rewrite the iteration as 
\begin{align}\label{eq:reduced-PPP}
w^{k+1} = \cC^{*}(\cM+\cA)^{-1}\cC w^{k}.
\end{align}
Moreover, from the weak limit $w^{*}$ of this sequence one gets via $(\cM+\cA)^{-1}\cC w^{*}$ a fixed point of $\cA$ (cf.~\cite[Theorem 2.14]{BCLN22}). In the above case this reduced algorithm is exactly the famous Douglas-Rachford iteration for the split inclusion $0\in A_1 x + A_2 x$ (see~\cite[Section 3]{BCLN22}).\bigskip

The main goal of the present work is to generalize \eqref{eq:prepp} allowing errors in the computation of $(\cM+\cA)^{-1}$ within relative-error tolerances in the spirit of \eqref{eq:v.hpe}. 
Similarly to \eqref{eq:dec.prox}, the iteration \eqref{eq:prepp} can be decoupled as a inclusion/equation system leading to 
the new error criterion
\[
\cM v^{k+1}\in \cA \tu^{k+1},\quad  
 \norm{\lambda_k v^{k+1} + \tu^{k+1} - u^k}_{\cM} \leq \sigma \norm{\tu^{k+1} - u^k}_{\cM},
\]
where $\norm{\cdot}_{\cM}$ denotes the seminorm induced by $\cM$.

We apply our (relative-error) inexact degenerate preconditioned HPE framework for different instances of $\cA$ and $\cM$, obtaining in this way new flexible and efficient operator-splitting methods for \eqref{eq:prob02}, \eqref{eq:prob03} and
\eqref{eq:prob04} (more details below in 
Subsection \ref{sec:contribution}).

\subsection{Related work}
\label{sec:related-work}

Inexact steps for methods that solve monotone inclusions and which are based on resolvents have been investigated from different angles. 
One line of work models additive errors in each step and derive conditions under which one still obtains convergence. 
In \cite{combettes2004solving}, for example, it is shown that the proximal gradient method converges if one still allows for additive errors that are summable. A similar result for the Douglas-Rachford method can be found in~\cite{svaiter2011weak}. In both cases, one has to drive the error down to zero in a pre-designed manner. Moreover, this condition is hard to check in practice (one would need a way to calculate or estimate the error to the exact step). Another line of work uses relative error conditions and this approach dates back to~\cite{solodov1999} for the proximal point method. In these approaches the error condition only need quantities that can be computed from the iterates. Only recently, this approach has been extended to splitting methods, more specifically to the Douglas-Rachford method (and hence also to the alternating directions method of multipliers (ADMM)) in~\cite{alves2020, eckstein2018}.

Many splitting methods can be written in the framework of the proximal point method. For the Douglas-Rachford method this has been observed in~\cite{eckstein1992}. In~\cite{bredies2015preconditioned} it has been observed that a specific reformulation of the split inclusion $0\in (A_{1}+A_{2})x$ in a product space and the use of a \emph{degenerate} preconditioner for the proximal point method also leads to the Douglas-Rachford method (see also~\cite{BCLN22}). The paper~\cite{BCLN22} also showed that other splitting methods, involving any number of monotone operators, can be derived in this framework. In~\cite{bredies2022graph} it has then been shown that one can design splitting methods for any ``communication structure'' between the operators, i.e. one can prescribe in which order the resolvents of the operators are evaluated and how the results are passed among the operators. The communication structure can be encoded by a communication graph which is then used to design the respective algorithm. The methods introduced in~\cite{bredies2022graph} extended existing splitting algorithms such as the one introduced by Ryu in \cite{Ryu}, the parallel Douglas-Rachford introduced and analyzed in \cite{RyuYin_parallel, condat, Campoy2022} and the method introduced by Malitsky and Tam in \cite{MalitskyTam2023resolvent}. In~\cite{bredies2022graph} the authors also showed that all these algorithms (and more in general all the graph-based ones) are a particular case of degenerate preconditioned proximal point. In the present work, we introduce relative errors criteria for the degenerate preconditioned proximal point algorithm and thus, all the methods mentioned above can benefit from our analysis. The connection established in this work between the HPE framework and the degenerate preconditioning setting lays also the basis for future exploration of (degenerate) variable metric analysis \cite{lorenz2024}, potentially enabling the development of more adaptive and efficient splitting methods.

\subsection{Contribution}
\label{sec:contribution}

In this paper we make the following contributions:
\begin{itemize}
\item We extend the hybrid proximal extragradient method to proximal point methods with degenerate preconditioners and show weak convergence of the iterates.
\item We show how this approach can be applied to splitting methods, namely the Douglas-Rachford method (recovering the results by Eckstein and Yao from~\cite{eckstein2018}), the Chambolle-Pock method~\cite{chambolle2011} and the Davis-Yin method~\cite{davis2017}, thereby developing inexact versions of the algorithms with relative error conditions.
\item We illustrate in numerical examples that the proposed methods are effective and efficient when applied to practical problems.
\end{itemize}

\subsection{Notation}
\label{sec:notation}

Let $\HH$ be a real Hilbert space.
Let $T \colon \HH \to 2^\HH$ be a multivalued map. 
The \emph{effective domain} and \emph{graph} of $T$ are $\Dom T = \set{x  \mid Tx \neq \emptyset}$ and $\Gra T =\{(x, v) \mid v \in Tx\}$, respectively. 
The \emph{inverse} of $T \colon \HH \to 2^\HH$ is $T^{-1} \colon \HH \to 2^\HH$ defined at
any $x\in \HH$  by $v\in T^{-1}x$ if and only if $x\in Tv$. 
The \emph{sum} of two multivalued maps 
$T, S\colon \HH \to 2^\HH$  is 
$T + S \colon \HH \to 2^\HH$, defined by 
the usual Minkowski sum $(T + S)(x) = \set{u + v \mid u\in Tx, v\in Sx}$. 
For $\lambda > 0$, we also define 
$\lambda T \colon \HH \to 2^\HH$ by 
$(\lambda T)x = \lambda Tx = \set{ \lambda v \mid v \in Tx}$.
Whenever necessary, we will also identify single-valued maps $B \colon \Dom B\subset \HH \to \HH$ with its multivalued representation 
$B \colon \HH \to 2^\HH$ by $Bx = \set{Bx}$. 
A multivalued map $T \colon \HH \to 2^\HH$ is said to be a \emph{monotone operator} 
if $\inner{x - y}{u - v} \geq 0$ for all $(x, u)$, $(y, v) \in \Gra T$, and \emph{maximal monotone} if it is monotone and its graph $\Gra T$ is not properly contained in the graph of any other monotone operator on $\HH$.
A single-valued map 
$B \colon \Dom B\subset \HH \to \HH$ is monotone
if $\inner{x - y}{Bx - By}\geq 0$ for all $x, y \in \Dom B$. The \emph{resolvent} of a maximal monotone operator 
$T \colon \HH \to 2^\HH$ is $J_{T} = (I + T)^{-1}$, where $I$ denotes the identity operator in $\HH$. 
Let $f \colon \HH \to (-\infty, +\infty]$ be an extended real-valued function. The \emph{domain} and \emph{epigraph} of $f$
are $\dom f = \set{x \mid f(x) < +\infty}$ and $\epi f = \set{(x, \mu) \in \HH\times \R \mid \mu \geq f(x)}$, respectively.
Recall that $f$ is \emph{proper} if $\dom f\neq \emptyset$ and \emph{convex} (resp. \emph{lower semicontinuous}) if $\epi f$ is a convex (resp. closed) subset of $\HH\times \R$. 
The \emph{subdifferential} of $f$ is 
$\partial f \colon \HH \to 2^\HH$ defined by 
$\partial f(x) = \set{ v \mid f(y) \geq f(x) + \inner{y-x}{v}\;\; \mbox{for all}\; y \in \HH}$.
We also denote by $\Gamma_0(\HH)$ the set of all proper, convex and lower semicontinuous functions on $\HH$. We say that a map $f:\HH\to \R$ is $L$-\emph{smooth} if its gradient $\nabla f$ is Lipschitz continuous with constant $L$.

An operator $B:\HH\to\HH$ is \emph{$\beta$-cocoercive} if for all $x,y\in \HH$ it holds that 
\begin{align*}
\inner{Bx-By}{x-y} \geq \beta\norm{Bx-By}^{2}.
\end{align*}

For a linear, bounded, self-adjoint and positive semidefinite map $\cM\colon\HH\to\HH$ we call a decomposition $\cM = \cC\cC^{*}$ with $\cC$ linear, bounded and injective from a real Hilbert space $\DD$ to $\HH$ and \emph{onto decomposition} if $\cC^{*}$ is onto. Such a decomposition exists if $\cM$ has closed range (cf.~\cite[Proposition 2.3]{BCLN22}). With such an onto decomposition we have the identity 
\begin{align}
\label{eq:M-norm-C}
\norm{x}_{\cM}^{2} = \inner{x}{\cM x} = \inner{x}{\cC\cC^{*}x} = \norm{\cC^{*}x}^{2}.
\end{align}

For a multivalued map $A:\HH_{1}\to 2^{\HH_{1}}$ on a real Hilbert space $\HH_{1}$ and a linear and bounded operator $K:\HH_{1}\to\HH_{2}$ into another real Hilbert space the \emph{parallel composition} is $K\parcomp A = (KA^{-1}K^{*})^{-1}$. It holds that $K\parcomp A$ is a multivalued map on $\HH_{2}$ and it is monotone if $A$ is monotone. Maximal monotonicity of $K\parcomp A$ can be guaranteed under additional assumptions, cf.~\cite[Proposition 25.41]{BCombettes}.

\section{The hybrid proximal extragradient method with degenerate preconditioning}
\label{sec:hpe}

In this section we introduce and study the hybrid proximal extragradient method with a degenerate preconditioner. We consider a maximal monotone operator $\cA$ defined on a real Hilbert space $\HH$ and a linear operator $\cM$ which is \emph{admissible} in the sense of the following definition.
\mgap
\begin{definition}[\cite{BCLN22}]
An admissible preconditioner for the multivalued operator $\cA:\HH \to 2^{\HH}$ is a linear, bounded, self-adjoint and positive semidefinite operator
$\cM: \HH \to \HH$ such that
\[
 (\cM + \cA)^{-1}\cM\quad \mbox{is single-valued and has full domain}.
\]
\end{definition}

As noted in \cite{BCLN22}, this condition on the preconditioner $\cM$ is quite mild, in particular for applications to splitting methods where it is always satisfied (see also \cite{naldi2024thesis} for further details).\\
For a given initial value $u^{0}$, a constant $\sigma\in[0,1)$ and stepsizes $\lambda_{k}>0$ we consider sequences $\seq{u^{k}},\seq{\tu^{k}}, \seq{v^{k}}$ that fulfill 
\begin{align}
  \cM v^{k+1} & \in \cA \tu^{k+1},\label{eq:HPE-inclusion}\\
  \norm{\lambda_{k+1}v^{k+1} + \tu^{k+1} - u^{k}}_{\cM} & \leq \sigma\norm{\tu^{k+1}-u^{k}}_{\cM}, \label{eq:HPE-error-condition}\\
  u^{k+1} & = u^k - \lambda_{k+1}v^{k+1}. \label{eq:HPE-update} 
\end{align}

Before we analyze these sequences, let us provide some intuition about the iteration. 

\mgap

\begin{remark}
    In the case $\sigma = 0$, the inequality~\eqref{eq:HPE-error-condition} says that
\begin{align*}
  \cM (\lambda_{k+1}v^{k+1} + \tu^{k+1} - u^{k})=0
\end{align*}
which, together with the inclusion~\eqref{eq:HPE-inclusion}, shows that 
\begin{align*}
\tu^{k+1} \in (\cM+\lambda_{k+1}\cA)^{-1} \cM u^{k}.
\end{align*}
Moreover, we get that the update~\eqref{eq:HPE-update} is (after application of $\cM$)
\begin{align*}
  \cM u^{k+1} & = \cM u^{k} - \lambda_{k+1}\cM v^{k+1} = \cM \tu^{k+1}.
\end{align*}
This shows that the sequence 
$\seq{u^{k}}$ and $\seq{\tu^{k}}$ only differ in the kernel of $\cM$ and convergence of $\seq{u^{k}}$ or $\seq{\tu^{k}}$ may not be true in the general case. However, we will show below that the sequence $\seq{\tu^{k}}$ is indeed bounded and weakly converges to a zero of $\cA$.
\end{remark}

\mgap

The next two proposition provide some foundamental properties of the sequences generated by \eqref{eq:HPE-inclusion}-~\eqref{eq:HPE-update}.

\mgap

\begin{proposition}[Fundamental estimates]
  \label{prop:fundamental-estimates-HPE}
  Let $\cA$ be maximal monotone on $\HH$, $u^{*}$ a zero of $\cA$, $\cM$ linear, bounded and positive semidefinite on $\HH$ and $u^0\in \HH$.
  Consider sequences $\seq{u^{k}},\seq{\tu^{k}},\seq{v^{k}}$ fulfilling \eqref{eq:HPE-inclusion}-~\eqref{eq:HPE-update} for $\lambda_{k}>0$ and $\sigma\in[0,1)$. Then, for all $k \geq 0$,
  \begin{enumerate}[label=\roman*)]
  \item $(1-\sigma)\norm{\tu^{k+1} - u^{k}}_{\cM}\leq \norm{\lambda_{k+1}v^{k+1}}_{\cM} \leq (1+\sigma)\norm{\tu^{k+1}-u^{k}}_{\cM}$,
  \item $\norm{u^{k+1}-u^{*}}_{\cM}^{2} + (1-\sigma^{2})\norm{u^{k}-\tu^{k+1}}_{\cM}^{2} \leq \norm{u^{k}-u^{*}}_{\cM}^{2}$,
  \item $\norm{u^{k+1} - u^{*}}_{\cM}^{2} + (1-\sigma^{2}) \sum\limits_{i=0}^k\norm{u^{i}-\tu^{i+1}}_{\cM}^{2}\leq \norm{u^{0}-u^{*}}_{\cM}^{2}$.
  \end{enumerate}
  Consequently we also have for $k\to\infty$
  \begin{enumerate}[label=\roman*)]\setcounter{enumi}{3}
  \item $\norm{u^{k}-\tu^{k+1}}_{\cM} \to 0,\quad \norm{\lambda_{k}v^{k}}_{\cM}\to 0,\quad \norm{u^{k}-\tu^{k}}_{\cM}\to 0$.
  \end{enumerate}
\end{proposition}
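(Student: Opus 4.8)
The plan is to establish the four items in order, with item (ii) as the cornerstone and items (i), (iii), (iv) following by elementary manipulations. Throughout I would work exclusively with the degenerate inner product $\inner{\cdot}{\cdot}_{\cM}$ and its induced seminorm, using that $\inner{\cdot}{\cdot}_{\cM}$ is a symmetric, bilinear, positive-semidefinite form. Consequently the triangle inequality, the polarization identity $\norm{a+b}_{\cM}^{2} = \norm{a}_{\cM}^{2} + 2\inner{a}{b}_{\cM} + \norm{b}_{\cM}^{2}$, and the self-adjointness identity $\inner{v}{w}_{\cM} = \inner{\cM v}{w}$ are all available and will be the only tools needed.

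For item (i), I would set $a = \lambda_{k+1}v^{k+1}$ and $b = \tu^{k+1} - u^{k}$, so that the error condition \eqref{eq:HPE-error-condition} reads $\norm{a+b}_{\cM} \leq \sigma\norm{b}_{\cM}$. Two applications of the triangle inequality in the seminorm, namely $\norm{a}_{\cM} \leq \norm{a+b}_{\cM} + \norm{b}_{\cM}$ and $\norm{b}_{\cM} \leq \norm{a+b}_{\cM} + \norm{a}_{\cM}$, then yield $\norm{a}_{\cM} \leq (1+\sigma)\norm{b}_{\cM}$ and $(1-\sigma)\norm{b}_{\cM} \leq \norm{a}_{\cM}$, which is exactly (i).

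For item (ii), the main step, I would expand $\norm{u^{k+1}-u^{*}}_{\cM}^{2}$ via the update \eqref{eq:HPE-update}, writing $u^{k+1} - u^{*} = (u^{k} - u^{*}) - \lambda_{k+1}v^{k+1}$, to obtain $\norm{u^{k+1}-u^{*}}_{\cM}^{2} = \norm{u^{k}-u^{*}}_{\cM}^{2} - 2\inner{\lambda_{k+1}v^{k+1}}{u^{k}-u^{*}}_{\cM} + \norm{\lambda_{k+1}v^{k+1}}_{\cM}^{2}$. The cross term is handled by monotonicity: splitting $u^{k} - u^{*} = (u^{k} - \tu^{k+1}) + (\tu^{k+1} - u^{*})$ and using $\inner{v}{w}_{\cM} = \inner{\cM v}{w}$ together with the inclusions $\cM v^{k+1}\in\cA\tu^{k+1}$ and $0\in\cA u^{*}$, the contribution $\inner{\lambda_{k+1}v^{k+1}}{\tu^{k+1}-u^{*}}_{\cM} = \lambda_{k+1}\inner{\cM v^{k+1}}{\tu^{k+1}-u^{*}}$ is nonnegative, giving $\inner{\lambda_{k+1}v^{k+1}}{u^{k}-u^{*}}_{\cM} \geq \inner{\lambda_{k+1}v^{k+1}}{u^{k}-\tu^{k+1}}_{\cM}$. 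Substituting this bound and completing the square against $\tu^{k+1}-u^{k}$ converts the remaining two terms into $\norm{\lambda_{k+1}v^{k+1} + \tu^{k+1} - u^{k}}_{\cM}^{2} - \norm{\tu^{k+1}-u^{k}}_{\cM}^{2}$, at which point \eqref{eq:HPE-error-condition} bounds the first summand by $\sigma^{2}\norm{\tu^{k+1}-u^{k}}_{\cM}^{2}$ and yields precisely (ii).

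Finally, items (iii) and (iv) are consequences. Item (iii) follows by summing the inequality in (ii) over $i = 0,\dots,k$ and telescoping the $\norm{u^{i}-u^{*}}_{\cM}^{2}$ terms. From (iii), since $1-\sigma^{2} > 0$ and the right-hand side is finite, the series $\sum_{i}\norm{u^{i} - \tu^{i+1}}_{\cM}^{2}$ converges, so its summands tend to zero and $\norm{u^{k} - \tu^{k+1}}_{\cM} \to 0$; combined with (i) this forces $\norm{\lambda_{k}v^{k}}_{\cM} \to 0$, and then $\norm{u^{k} - \tu^{k}}_{\cM} \leq \norm{u^{k-1}-\tu^{k}}_{\cM} + \norm{\lambda_{k}v^{k}}_{\cM} \to 0$ using the update \eqref{eq:HPE-update}. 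The only subtlety to watch is that $\cM$ is merely positive semidefinite, so every estimate lives at the level of the seminorm and no step may secretly invoke definiteness; since monotonicity, polarization, and the triangle inequality hold verbatim for the semi-inner product, this causes no difficulty. The genuinely load-bearing step is the monotonicity-plus-completion-of-squares manipulation in (ii).
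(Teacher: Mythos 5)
Your proposal is correct and follows essentially the same route as the paper: the triangle-inequality argument for (i), monotonicity of $\cA$ applied to $\cM v^{k+1}\in\cA\tu^{k+1}$ and $0\in\cA u^{*}$ to discard the cross term together with the error condition \eqref{eq:HPE-error-condition} for (ii), and summing/telescoping for (iii)--(iv) all match the paper's proof. The only cosmetic difference is in (ii), where you expand $\norm{u^{k+1}-u^{*}}_{\cM}^{2}$ directly and complete the square against $\tu^{k+1}-u^{k}$, whereas the paper subtracts two instances of the identity $\norm{a}_{\cM}^{2}-\norm{b}_{\cM}^{2}=\norm{a-b}_{\cM}^{2}+2\inner{a-b}{b}_{\cM}$; the underlying algebra is identical.
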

\begin{proof}
  The error condition~\eqref{eq:HPE-error-condition} and the triangle inequality for the $\cM$-seminorm give
\begin{align*}
  \norm{\lambda_{k+1}v^{k+1}}_{\cM} & \leq \norm{\lambda_{k+1}v^{k+1} + \tu^{k+1} - u^{k}}_{\cM} + \norm{\tu^{k+1}- u^{k}}_{\cM} \\ & \leq (1+\sigma)\norm{\tu^{k+1}-u^{k}}_{\cM}
\end{align*}
which shows the right inequality in the first claim. For the left inequality we consider
\begin{align*}
  \norm{\tu^{k+1}-u^{k}}_{\cM} & \leq \norm{\lambda_{k+1}v^{k+1} + \tu^{k+1} - u^{k}}_{\cM} + \norm{\lambda_{k+1}v^{k+1}}_{\cM} \\
  & \leq  \sigma \norm{\tu^{k+1}-u^{k}}_{\cM} + \norm{\lambda_{k+1}v^{k+1}}_{\cM}
\end{align*}
which can be rearranged to the desired inequality.

  For the second claim we use the identity $\norm{a}_{\cM}^{2} - \norm{b}_{\cM}^{2} = \norm{a-b}_{\cM}^{2} +2\inner{a-b}{b}_{\cM}$ with 
$a=u^{k} - u^*$ and $b=u^{k+1} - u^*$ and get
\begin{align*}
\norm{u^{k} - u^*}_{\cM}^{2} - \norm{u^{k+1} - u^*}_{\cM}^{2} = \norm{u^{k}-u^{k+1}}_{\cM}^{2} + 2\inner{u^{k} - u^{k+1}}{u^{k+1} - u^*}_{\cM}.
\end{align*}
By repeating the same argument with $a=u^{k} - \tu^{k+1}$ and $b=u^{k+1} - \tu^{k+1}$ we also find
\begin{align*}
\norm{u^{k} - \tu^{k+1}}_{\cM}^{2} - \norm{u^{k+1} - \tu^{k+1}}_{\cM}^{2} & = \norm{u^{k}-u^{k+1}}_{\cM}^{2} \\ & \hspace{1cm} + 2\inner{u^{k} - u^{k+1}}{u^{k+1} - \tu^{k+1}}_{\cM}.
\end{align*}
Now by subtracting the latter from the former equality and using the update~\eqref{eq:HPE-update} we obtain
\begin{align*}
\norm{u^{k} - u^*}_{\cM}^{2} - \norm{u^{k+1} - u^*}_{\cM}^{2} &= \norm{u^{k} - \tu^{k+1}}_{\cM}^{2} - \norm{u^{k+1} - \tu^{k+1}}_{\cM}^{2}
\\ & \hspace{3cm} +2\inner{u^{k} - u^{k+1}}{\tu^{k+1} - u^*}_{\cM}\\
 & = \norm{u^{k} - \tu^{k+1}}_{\cM}^{2} - \norm{u^{k+1} - \tu^{k+1}}_{\cM}^{2}
\\ & \hspace{3cm} 
 +2\lambda_{k+1}\inner{v^{k+1}}{\tu^{k+1} - u^*}_{\cM}.
\end{align*}
Since $\cM v^{k+1}\in \cA \tu^{k+1}$ and 
$0\in \cA u^*$ one can use the monotonicity of $\cA$ and the definition of
$\inner{\cdot}{\cdot}_{\cM}$ to conclude that
\begin{align*}
\inner{v^{k+1}}{\tu^{k+1} - u^*}_{\cM} = \inner{\cM v^{k+1}}{\tu^{k+1} - u^*}\geq 0.
\end{align*}
Thus, we get from the update step~\eqref{eq:HPE-update} and the error condition~\eqref{eq:HPE-error-condition}
\begin{align*}
\norm{u^{k} - u^*}_{\cM}^{2} - \norm{u^{k+1} - u^*}_{\cM}^{2} &\geq \norm{u^{k} - \tu^{k+1}}_{\cM}^{2} - \norm{u^{k+1} - \tu^{k+1}}_{\cM}^{2}\\
 & = \norm{u^{k} - \tu^{k+1}}_{\cM}^{2} - \norm{\lambda_{k+1} v^{k+1} + \tu^{k+1} - u^{k}}_{\cM}^{2}\\
 & \geq (1-\sigma^2)\norm{u^{k}-\tu^{k+1}}_{\cM}^{2},
\end{align*}
which proves the desired inequality.

The third claim is a direct consequence of second by summing up and using a telescope sum.

The first two final convergence statements follow directly and for the last statement observing that by~\eqref{eq:HPE-error-condition} and~\eqref{eq:HPE-update} we have 
\begin{align*}
\norm{\tu^{k+1}-u^{k+1}}_{\cM}\leq 
\sigma\norm{\tu^{k+1} - u^{k}}_{\cM}
\end{align*}
which gives the desired convergence.
\end{proof}

\begin{proposition}[Boundedness and weak subsequential convergence]
  \label{prop:HPE-bounded-x-tilde}
  Let $\cA$ be maximal monotone on $\HH$, $u^{*}$ be a zero of $\cA$, $\cM$ an admissible preconditioner of $\cA$ with the onto decomposition $\cM=\cC\cC^{*}$. If $(\cM+\cA)^{-1}$ is Lipschitz continuous, $\seq{\tu^{k}}$ is generated by the iteration~\eqref{eq:HPE-inclusion}-~\eqref{eq:HPE-update} and $\inf_{k}\lambda_{k}>0$, then the sequence $\seq{\tu^{k}}$ is bounded and all weak cluster points are zeros of $\cA$.
\end{proposition}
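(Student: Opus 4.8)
The plan is to combine the Lipschitz continuity of $(\cM+\cA)^{-1}$ with the fundamental estimates of Proposition~\ref{prop:fundamental-estimates-HPE}. The key preliminary step is to rewrite the inclusion~\eqref{eq:HPE-inclusion} so that the \emph{unscaled} resolvent $(\cM+\cA)^{-1}$ appears, rather than $(\cM+\lambda_{k+1}\cA)^{-1}$. Adding $\cM\tu^{k+1}$ to both sides of $\cM v^{k+1}\in\cA\tu^{k+1}$ yields $\cM(\tu^{k+1}+v^{k+1})\in(\cM+\cA)\tu^{k+1}$, and since $(\cM+\cA)^{-1}$ is single-valued on its domain (being Lipschitz), I would record
\begin{align*}
\tu^{k+1} = (\cM+\cA)^{-1}\cM(\tu^{k+1}+v^{k+1}), \qquad u^{*}=(\cM+\cA)^{-1}\cM u^{*},
\end{align*}
the second identity holding because $\cM u^{*}\in(\cM+\cA)u^{*}$ for a zero $u^{*}$ of $\cA$.

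For boundedness, writing $L$ for the Lipschitz constant and using \eqref{eq:M-norm-C} together with $\norm{\cM z}=\norm{\cC\cC^{*}z}\leq\norm{\cC}\,\norm{z}_{\cM}$, the two identities give
\begin{align*}
\norm{\tu^{k+1}-u^{*}}\leq L\,\norm{\cM(\tu^{k+1}+v^{k+1}-u^{*})}\leq L\norm{\cC}\,\norm{\tu^{k+1}+v^{k+1}-u^{*}}_{\cM}.
\end{align*}
It then remains to bound the $\cM$-seminorm on the right, and I would do so by the triangle inequality, estimating it by $\norm{\tu^{k+1}-u^{k}}_{\cM}+\norm{u^{k}-u^{*}}_{\cM}+\norm{v^{k+1}}_{\cM}$. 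Here the middle term is $\leq\norm{u^{0}-u^{*}}_{\cM}$ by the Fej\'er-type monotonicity in item~ii), the first term vanishes by item~iv), and the last one equals $\lambda_{k+1}^{-1}\norm{\lambda_{k+1}v^{k+1}}_{\cM}\leq(\inf_{k}\lambda_{k})^{-1}\norm{\lambda_{k+1}v^{k+1}}_{\cM}\to0$, again by item~iv) and $\inf_{k}\lambda_{k}>0$. Thus the right-hand side is bounded and so is $\seq{\tu^{k}}$.

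For the statement on weak cluster points, I would first note that $\cM v^{k}\to0$ strongly, since $\norm{\cM v^{k}}\leq\norm{\cC}\,\norm{v^{k}}_{\cM}\leq\norm{\cC}(\inf_{k}\lambda_{k})^{-1}\norm{\lambda_{k}v^{k}}_{\cM}\to0$ by item~iv). Let $\bar{u}$ be a weak cluster point, say $\tu^{k_{j}}\rightharpoonup\bar{u}$. Then $(\tu^{k_{j}},\cM v^{k_{j}})\in\Gra\cA$ with $\tu^{k_{j}}\rightharpoonup\bar{u}$ and $\cM v^{k_{j}}\to0$, and I would conclude $0\in\cA\bar{u}$ from the sequential closedness of the graph of the maximal monotone $\cA$ in the weak$\times$strong topology: for every $(a,b)\in\Gra\cA$, monotonicity gives $\inner{\tu^{k_{j}}-a}{\cM v^{k_{j}}-b}\geq0$, and passing to the limit (weak paired with strong) yields $\inner{\bar{u}-a}{-b}\geq0$, whence $0\in\cA\bar{u}$ by maximality.

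The main obstacle is the opening rewriting: the hypothesis controls only $(\cM+\cA)^{-1}$, whereas the iteration is phrased through the $\lambda_{k}$-scaled operator, so the stepsize must be kept out of the resolvent and confined to the term $\norm{v^{k+1}}_{\cM}$, where the bound $\inf_{k}\lambda_{k}>0$ can be applied. It is worth noting that only a lower bound on the stepsizes is needed; every appearance of $\lambda_{k}$ is paired with $\norm{\lambda_{k}v^{k}}_{\cM}$, which vanishes by Proposition~\ref{prop:fundamental-estimates-HPE}, so no upper bound on $\seq{\lambda_{k}}$ is required.
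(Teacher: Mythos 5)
Your proposal is correct and follows essentially the same route as the paper's proof: you rewrite the inclusion~\eqref{eq:HPE-inclusion} as the fixed-point identity $\tu^{k+1}=(\cM+\cA)^{-1}\cM(\tu^{k+1}+v^{k+1})$, combine the Lipschitz continuity of $(\cM+\cA)^{-1}$ with the bound $\norm{\cM z}\leq\norm{\cC}\norm{z}_{\cM}$ and the estimates of Proposition~\ref{prop:fundamental-estimates-HPE} to get boundedness, and then use the weak--strong closedness of $\Gra\cA$ together with $\cM v^{k}\to 0$ for the cluster-point claim. The only cosmetic differences are that you split the $\cM$-seminorm via the intermediate point $u^{k}$ (making explicit a step the paper leaves implicit) and that you prove the graph-closedness from monotonicity and maximality rather than citing it.
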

\begin{proof}
  From~\eqref{eq:HPE-inclusion} we get that $\cM(v^{k+1} +\tu^{k+1}) \in (\cM+\cA)\tu^{k+1}$ which is equivalent to $\tu^{k+1} = (\cM+\cA)^{-1}\cM(v^{k+1} + \tu^{k+1})$. Since $u^{*}$ is a zero of $\cA$, it is also a fixed point of $(\cM+\cA)^{-1}\cM$. With this and the Lipschitz continuity of $(\cM+\cA)^{-1}$ (with constant $L$, say) we get (using~\eqref{eq:M-norm-C})
\begin{align*}
  \norm{\tu^{k+1}-u^{*}} & = \norm{(\cM+\cA)^{-1}\cM(v^{k+1} + \tu^{k+1}) - (\cM+\cA)^{-1}\cM u^{*}}\\
                               & \leq L \norm{\cC\cC^{*}(v^{k+1}+\tu^{k+1}-u^{*}}\\
                               & \leq L\norm{\cC}\norm{v^{k+1}+\tu^{k+1}-u^{*}}_{\cM}\\
  & \leq L\norm{\cC}\left(\norm{v^{k+1}}_{\cM} + \norm{\tu^{k+1}-u^{*}}_{\cM}\right).
\end{align*}
From Proposition~\ref{prop:fundamental-estimates-HPE} iii) we get that $\norm{\tu^{k+1}-u^{*}}_{\cM}$ is bounded and from iv) and $\inf_{k}\lambda_{k}>0$ we get $\norm{v^{k}}_{\cM}\to 0$, so the right hand side above is bounded.

Now let $\bar{u}$ be a weak cluster point of the sequence $\seq{\tu^{k}}$ and 
$\seq{\tu^{k_{j}}}$ be a subsequence which convergence weakly to $\bar{u}$. We have $\cM v^{k_{j}}\in \cA \tu^{k_{j}}$ and since $\norm{v^{k}}_{\cM}\to 0$ we also have $\norm{\cM v^{k}}\leq \norm{\cC}\norm{v^{k}}_{\cM}\to 0$. Since $\cA$ is maximal monotone, its graph is weakly-strongly closed in $\HH\times\HH$ and this proves that $0\in \cA \bar{u}$ as desired.
\end{proof}

We remark that we do not claim that the 
sequences $\seq{v^{k}}$ and $\seq{u^{k}}$ are bounded. In fact, they are not in general: In each iteration we can modify $v^{k}$ to $v^{k} + \tau v$ with some $v\neq 0$ with $\cC^{*}v = 0$ and fixed $\tau\neq 0$. This will not change the sequence $\seq{\tu^{k}}$, but clearly renders $\seq{v^{k}}$ unbounded. As $u^{k+1} = u^{0} - \sum\limits_{i=1}^{k+1}\lambda_iv^{i}$, the sequence $\seq{u^{k+1}}$ will be unbounded as well.

\mgap

\begin{lemma}[Opial property for seminorms]\label{Opial}
Let $\cM:\HH\to \HH$ be a linear bounded self-adjoint and positive semidefinite operator. Let $\seq{v^k}$ be a sequence weakly convergent to a point $v^*$, then 
	\[\liminf_{k\to \infty} \|v^k-v^*\|_{\cM}^2 = \liminf_{k\to \infty} \|v^k-v\|_{\cM}^2-\|v^*-v\|_{\cM}^2 \quad \text{ for all } v\in \HH.\]
	In particular, for every $v\in \HH \text{ with } \|v^*-v\|_{\cM}>0$, it holds 
	\[\liminf_{k\to \infty} \|v^k-v^*\|_{\cM} < \liminf_{k\to \infty} \|v^k-v\|_{\cM}.\]
\end{lemma}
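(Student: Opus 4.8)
The plan is to run the classical Opial argument, replacing the Hilbert inner product by the (possibly degenerate) bilinear form $\inner{x}{y}_{\cM} = \inner{x}{\cM y}$ associated with $\cM$. First I would expand the seminorm by writing $v^k - v = (v^k - v^*) + (v^* - v)$ and using that $\inner{\cdot}{\cdot}_{\cM}$ is symmetric (because $\cM$ is self-adjoint), obtaining
\[
\norm{v^k - v}_{\cM}^2 = \norm{v^k - v^*}_{\cM}^2 + 2\inner{v^k - v^*}{v^* - v}_{\cM} + \norm{v^* - v}_{\cM}^2.
\]

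The decisive step is to show that the cross term is a null sequence. Rewriting it as $\inner{v^k - v^*}{v^* - v}_{\cM} = \inner{v^k - v^*}{\cM(v^* - v)}$ and noting that $\cM(v^* - v)$ is a fixed vector of $\HH$ (here boundedness of $\cM$ is used), the weak convergence $v^k \wto v^*$ yields $\inner{v^k - v^*}{\cM(v^* - v)} \to 0$. Taking $\liminf_k$ on both sides of the expansion and using that a null perturbation does not affect the $\liminf$ while an additive constant commutes with it, I would arrive at
\[
\liminf_{k\to\infty}\norm{v^k - v^*}_{\cM}^2 = \liminf_{k\to\infty}\norm{v^k - v}_{\cM}^2 - \norm{v^* - v}_{\cM}^2.
\]
This is in fact an equality, which of course implies the asserted inequality; since weakly convergent sequences are bounded and $\cM$ is bounded, every $\liminf$ above is finite, so no $\infty - \infty$ ambiguity occurs.

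For the strict inequality I would simply feed the hypothesis $\norm{v^* - v}_{\cM} > 0$ into the displayed identity: the subtracted constant $\norm{v^* - v}_{\cM}^2$ is then strictly positive, so $\liminf_k \norm{v^k - v^*}_{\cM}^2 < \liminf_k \norm{v^k - v}_{\cM}^2$. Applying the square root, which is continuous and strictly increasing on $[0,\infty)$ (so that $\liminf_k \sqrt{a_k} = \sqrt{\liminf_k a_k}$ for nonnegative $a_k$), transfers the strict inequality to the seminorms themselves.

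I do not anticipate a real obstacle: this is the textbook Opial computation carried over to the degenerate inner product. The only two points demanding care are justifying the vanishing of the cross term --- which combines boundedness of $\cM$ with weak convergence --- and handling the $\liminf$ correctly, making sure that only a null sequence and an additive constant are separated out, so that the relation comes out as an equality rather than merely an inequality.
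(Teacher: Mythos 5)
Your proposal is correct and follows essentially the same route as the paper's proof: the same expansion of $\norm{v^k-v}_{\cM}^2$ via the decomposition $v^k-v=(v^k-v^*)+(v^*-v)$, with the cross term $\inner{v^k-v^*}{\cM(v^*-v)}$ vanishing by weak convergence since $\cM(v^*-v)$ is a fixed vector. The paper leaves the final $\liminf$ bookkeeping and the passage to the strict inequality implicit (``the claim follows''), whereas you spell these steps out; the substance is identical.
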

\begin{proof}
	We have \[\|v^k-v\|_{\cM}^2 = \|v^k-v^*+v^*-v\|_{\cM}^2 = \|v^k-v^*\|_{\cM}^2+\|v^*-v\|_{\cM}^2+2\langle \cM(v^*-v) , v^k-v^* \rangle. \]
	Since $v^k \rightharpoonup v^*$, we have and $\langle \cM(v^*-v) , v^k-v^* \rangle\to 0$ for $k\to \infty$, and the claim follows.
\end{proof}

\mgap

Making use of Proposition~\ref{prop:fundamental-estimates-HPE}, Proposition~\ref{prop:HPE-bounded-x-tilde} and the previous Lemma, we are now ready to provide the first main result of the paper.

\mgap

\begin{theorem}[Weak convergence of the iterates]\label{thm:weak-conv-HPE}
  Let $\cA$ be maximal monotone on $\HH$ which has at least one zero, $\cM$ an admissible preconditioner of $\cA$ with the onto decomposition $\cM=\cC\cC^{*}$. If $(\cM+\cA)^{-1}$ is Lipschitz continuous, 
  $\seq{\tu^{k}}$ and $\seq{u^{k}}$ are generated by the iteration~\eqref{eq:HPE-inclusion}-\eqref{eq:HPE-update} and $\inf_{k}\lambda_{k}>0$, then the sequence $\seq{\tu^{k}}$ converges weakly to some $u^{*}$ with $0\in \cA u^{*}$.
\end{theorem}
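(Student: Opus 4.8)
The plan is to run the classical Opial argument for Fej\'er-monotone sequences, but carried out in the degenerate seminorm $\norm{\cdot}_{\cM}$ and then lifted back to $\HH$ through the fixed-point characterization of zeros. First I would record the two ingredients that are already available. From Proposition~\ref{prop:fundamental-estimates-HPE} ii) the map $k\mapsto \norm{u^{k}-u^{*}}_{\cM}$ is nonincreasing for \emph{every} zero $u^{*}$ of $\cA$, hence convergent; combining this with Proposition~\ref{prop:fundamental-estimates-HPE} iv), which gives $\norm{u^{k}-\tu^{k}}_{\cM}\to 0$, the limit $\lim_{k\to\infty}\norm{\tu^{k}-u^{*}}_{\cM}$ exists and coincides with $\lim_{k\to\infty}\norm{u^{k}-u^{*}}_{\cM}$ for every zero $u^{*}$. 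From Proposition~\ref{prop:HPE-bounded-x-tilde} the sequence $\seq{\tu^{k}}$ is bounded and each of its weak cluster points is a zero of $\cA$; in particular such cluster points exist.

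Next I would establish uniqueness of the weak cluster point. Let $\bar u_{1},\bar u_{2}$ be two weak cluster points of $\seq{\tu^{k}}$, say $\tu^{k_{j}}\wto \bar u_{1}$ and $\tu^{l_{j}}\wto \bar u_{2}$, both zeros of $\cA$ by the previous step. Applying Lemma~\ref{Opial} to the weakly convergent subsequence $\tu^{k_{j}}$ with test point $\bar u_{2}$, and using that the full limits $\lim_{k}\norm{\tu^{k}-\bar u_{i}}_{\cM}$ exist (so that all the $\liminf$'s occurring are genuine limits), yields
\begin{align*}
\lim_{k\to\infty}\norm{\tu^{k}-\bar u_{1}}_{\cM}^{2} \leq \lim_{k\to\infty}\norm{\tu^{k}-\bar u_{2}}_{\cM}^{2} - \norm{\bar u_{1}-\bar u_{2}}_{\cM}^{2}.
\end{align*}
Interchanging the roles of $\bar u_{1}$ and $\bar u_{2}$ gives the symmetric inequality, and adding the two forces $\norm{\bar u_{1}-\bar u_{2}}_{\cM}=0$, that is $\cC^{*}(\bar u_{1}-\bar u_{2})=0$ by the identity~\eqref{eq:M-norm-C}.

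The main obstacle — and the point where the degeneracy of $\cM$ must be handled — is that seminorm-Opial only identifies cluster points \emph{modulo} $\ker\cM$: at this stage I only know $\cC^{*}\bar u_{1}=\cC^{*}\bar u_{2}$, not $\bar u_{1}=\bar u_{2}$. To upgrade this I would invoke admissibility of $\cM$ together with the fact that both $\bar u_{i}$ are zeros: if $0\in\cA\bar u_{i}$ then $\cM\bar u_{i}\in(\cM+\cA)\bar u_{i}$, and since $(\cM+\cA)^{-1}\cM$ is single-valued with full domain we obtain $\bar u_{i}=(\cM+\cA)^{-1}\cM\bar u_{i}=(\cM+\cA)^{-1}\cC\,\cC^{*}\bar u_{i}$, using $\cM=\cC\cC^{*}$. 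Because $\cC^{*}\bar u_{1}=\cC^{*}\bar u_{2}$, the right-hand sides coincide and hence $\bar u_{1}=\bar u_{2}$. Thus $\seq{\tu^{k}}$ is bounded with a unique weak cluster point, so the whole sequence converges weakly to some $u^{*}$, which is a zero of $\cA$ by Proposition~\ref{prop:HPE-bounded-x-tilde}. I expect the only delicate bookkeeping to be the verification that the $\liminf$'s are genuine limits (which is exactly what the existence of $\lim_{k}\norm{\tu^{k}-u^{*}}_{\cM}$ provides) and the lifting identity, which crucially relies on admissibility of the preconditioner and on the onto decomposition $\cM=\cC\cC^{*}$.
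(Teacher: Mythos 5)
Your proposal is correct and follows essentially the same route as the paper's own proof: existence of the limits $\lim_{k}\norm{\tu^{k}-u^{*}}_{\cM}$ via Proposition~\ref{prop:fundamental-estimates-HPE} ii) and iv), the seminorm Opial property (Lemma~\ref{Opial}) to force $\norm{\bar u_{1}-\bar u_{2}}_{\cM}=0$ for any two weak cluster points, and the lifting $\bar u_{i}=(\cM+\cA)^{-1}\cM\bar u_{i}$ through admissibility to upgrade equality modulo $\ker\cM$ to genuine equality. The only cosmetic differences are that you transfer the limit from $\seq{u^{k}}$ to $\seq{\tu^{k}}$ by the triangle inequality rather than the paper's expansion with Cauchy--Schwarz, and you add the two non-strict Opial inequalities instead of deriving a contradiction from the strict form; both are equally valid.
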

\begin{proof}
  For any $x$ we have 
\begin{align*}
\norm{\tu^{k}-x}_{\cM}^{2} = \norm{\tu^{k}-u^{k}}_{\cM}^2 + \norm{u^{k}-x}_{\cM}^2 + 2\inner{\tu^{k}-u^{k}}{u^{k}-x}_{\cM}.
\end{align*}
If $x$ is a fixed point of $(\cM+\cA)^{-1}\cM$ we know from Proposition~\ref{prop:fundamental-estimates-HPE} ii) that $\norm{u^{k}-x}_{\cM}$ is decreasing and thus, has a limit, say $\ell(x)$.
The Cauchy-Schwarz inequality for the semi-inner product gives 
\begin{align*}
\inner{\tu^{k}-u^k}{u^k-x}_{\cM}\leq\norm{\tu^{k}-u^{k}}_{\cM}\norm{u^{k}-x}_{\cM}.
\end{align*}
Since $\norm{u^{k}-x}_{\cM}$ is bounded and by Proposition~\ref{prop:fundamental-estimates-HPE} iv) $\norm{\tu^{k}-u^{k}}_{\cM}\to 0$ we get that 
\begin{align*}
\inner{\tu^{k}-u^k}{u^k-x}_{\cM}\to 0.
\end{align*}
Since we already know that $\norm{\tu^{k}-u^{k}}_{\cM}^2\to 0$ we get that 
\begin{align*}
\norm{\tu^{k}-x}_{\cM}\to \ell(x).
\end{align*}
To conclude the proof we use the Opial property with respect to the seminorm $\norm{\cdot}_{\cM}$. With this we argue as follows: 
Since, by Proposition \ref{prop:HPE-bounded-x-tilde}, $\seq{\tu^{k}}$ is bounded, it has a weak cluster point $u^{*}$ and assume that $\tu^{k_{j}}\wto u^{*}$. By Proposition~\ref{prop:HPE-bounded-x-tilde} we know that $u^{*}$ is a fixed point of $(\cM+\cA)^{-1}\cM$. Consider a different weak cluster point $u^{**}$ which is a limit of another subsequence $\seq{\tu^{l_{j}}}$. If we assume that $\norm{u^{*}-u^{**}}_{\cM}>0$, then we get from Opial's Lemma \ref{Opial} both 
\begin{align*}
  \liminf_{j\to\infty}\norm{\tu^{k_{j}}-u^{*}}_{\cM} & < \liminf_{j\to\infty}\norm{\tu^{l_{j}}-u^{*}}_{\cM}\\
  \text{and}\ \liminf_{j\to\infty}\norm{\tu^{l_{j}}-u^{*}}_{\cM} & < \liminf_{j\to\infty}\norm{\tu^{k_{j}}-u^{*}}_{\cM}
\end{align*}
which is a contradiction. So we have $\norm{u^{*}-u^{**}}_{\cM}=0$ which means that $\cC^{*}u^{*}=\cC^{*}u^{**}$ and also $\cM u^{*}=\cM u^{**}$. But since $u^{*}$ and $u^{**}$ are both fixed points of $(\cM+\cA)^{-1}\cM$ we get
\begin{align*}
u^{*}  = (\cM+\cA)^{-1}\cM u^{*} = (\cM+\cA)^{-1}\cM u^{**} = u^{**}.
\end{align*}
In conclusion, the full sequence $\seq{\tu^{k}}$ converges weakly to a fixed point, as desired.
\end{proof}

Using the onto decomposition $\cM = \cC\cC^{*}$ we can obtain a reduced version of the preconditioned HPE method in the spirit of~\eqref{eq:reduced-PPP}. This iteration works as follows: Choose $w_{0}\in \DD$, $\sigma$ with $0\leq\sigma<1$ and stepsizes $\lambda_k>0$. Iteratively construct 
sequences $\seq{w^{k}}, \seq{z^{k}},\seq{\tu^{k}}$ which fulfill 
\begin{align}
  \cC z^{k+1} & \in \cA \tu^{k+1}\label{eq:reducedHPE-inclusion}\\
  \norm{\lambda_{k+1}z^{k+1}+\cC^{*}\tu^{k+1}-w^{k}} & \leq \sigma\norm{\cC^{*}\tu^{k+1}-w^{k}}\label{eq:reducedHPE-error-condition}\\
  w^{k+1} = w^{k}-\lambda_{k+1}z^{k+1}.\label{eq:reducedHPE-update}
\end{align}

\begin{algorithm}[t]
\setstretch{1.1}
	\begin{algorithmic}[1]
		\STATE \textbf{Initialize:} $w^0\in \DD$ and $\sigma \in [0,1)$ and stepsize sequence $\lambda_{k}>0$
		\FOR{$k=0,1, 2 \dots$}
                \STATE Produce a pair $(\tu^{k+1},z^{k+1})\in\HH\times\DD$ with
                \begin{align*}
                  \cC z^{k+1}\in \cA \tu^{k+1},\qquad \lambda_{k+1}z^{k+1}+\cC^*\tu^{k+1}\approx w^{k}.
                \end{align*}\vspace*{-1\baselineskip}
                \WHILE{$\|\lambda_{k+1}z^{k+1}+\cC^*\tu^{k+1}-w^{k}\|>\sigma\|\cC^*\tu^{k+1}-w^{k}\|$}
                \STATE Improve the pair $(\tu^{k+1},z^{k+1})\in\HH\times\DD$ with $\cC z^{k+1}\in \cA \tu^{k+1}$\\ to reduce $\norm{\lambda_{k+1}z^{k+1}+\cC^*\tu^{k+1} - w^{k}}$.
                \ENDWHILE
		\STATE Set $w^{k+1}=w^{k}-\lambda_{k+1}z^{k+1}$.
		\ENDFOR
	\end{algorithmic}
	\caption{Reduced version of the preconditioned HPE algorithm.}\label{alg:HPE_r}
\end{algorithm}

We have the following result.

\mgap

\begin{theorem}\label{thm:convergence_reduced}
  Let $\cA$ be maximal monotone on $\HH$ which has at least one zero and $\cC:\DD\to\HH$ be a linear bounded and injective operator such that $\cC\cC^{*}$ is an 
  admissible preconditioner of $\cA$. Then the sequence $\seq{w^{k}}$ generated by iteration~\eqref{eq:reducedHPE-inclusion}-\eqref{eq:reducedHPE-update} with $\inf \lambda_{k}>0$ converges weakly some $w^{*}$ which is a zero of $\cC^{*}\parcomp \cA$.

  Moreover it holds that $u^{*}= (\cC\cC^{*}+\cA)^{-1}\cC w^{*}$ is a zero of $\cA$ and if $(\cC\cC^{*}+\cA)^{-1}$ is Lipschitz continuous, then $\seq{\tu^{k}}$ converges weakly to $u^{*}$.
\end{theorem}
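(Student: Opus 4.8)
The plan is to reduce the entire statement to the theory already developed for the non-degenerate case by recognizing the reduced iteration as an ordinary HPE iteration in the space $\DD$. Concretely, set $\mathcal{B} := \cC^{*}\parcomp\cA = (\cC^{*}\cA^{-1}\cC)^{-1}$ on $\DD$ and define $\tilde w^{k+1} := \cC^{*}\tu^{k+1}$. The inclusion $\cC z^{k+1}\in\cA\tu^{k+1}$ then unravels to $z^{k+1}\in\mathcal{B}\tilde w^{k+1}$, while \eqref{eq:reducedHPE-error-condition} and \eqref{eq:reducedHPE-update} are verbatim the error condition and extragradient step of \eqref{eq:HPE-error-condition}--\eqref{eq:HPE-update} for $\mathcal{B}$ with the trivial preconditioner equal to the identity $I$ on $\DD$ (so that the $\cM$-seminorm becomes the genuine norm of $\DD$). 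Hence the reduced scheme is exactly Algorithm~\ref{alg:HPE_r} read as an HPE method for $\mathcal{B}$, and Proposition~\ref{prop:fundamental-estimates-HPE} and Theorem~\ref{thm:weak-conv-HPE} become available in $\DD$ once $\mathcal{B}$ is known to be maximal monotone.

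I expect the maximal monotonicity of $\mathcal{B}=\cC^{*}\parcomp\cA$ to be the genuinely delicate step, since the parallel composition is in general only monotone (cf.\ the remark in Subsection~\ref{sec:notation}). Monotonicity of $\mathcal{B}$ is inherited from $\cA$, so by Minty's theorem it suffices to show that $I+\mathcal{B}$ is onto $\DD$ with single-valued inverse. The plan is to compute the resolvent explicitly: writing $z=(I+\mathcal{B})^{-1}w$ and unravelling the parallel composition leads to $\cC w\in(\cM+\cA)a$ with $z=\cC^{*}a$, i.e.\ $a=(\cM+\cA)^{-1}\cC w$, so that
\begin{align*}
(I+\mathcal{B})^{-1} = \cC^{*}(\cM+\cA)^{-1}\cC,\qquad \cM=\cC\cC^{*}.
\end{align*}
Since $\Ran\cC=\Ran\cM$ and $\cM$ is an admissible preconditioner of $\cA$, the operator $(\cM+\cA)^{-1}$ is single-valued and defined on all of $\Ran\cC$; hence $(I+\mathcal{B})^{-1}$ is single-valued with full domain $\DD$. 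Minty's theorem then gives maximal monotonicity of $\mathcal{B}$, and its resolvent is nonexpansive, so $I$ is an admissible, Lipschitz preconditioner of $\mathcal{B}$.

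With this in hand the first two assertions follow quickly. A zero of $\mathcal{B}$ exists because $\cC^{*}u^{*}$ is one whenever $u^{*}$ is a zero of $\cA$, and such $u^{*}$ exists by hypothesis. Applying Theorem~\ref{thm:weak-conv-HPE} in $\DD$ to $\mathcal{B}$ with preconditioner $I$ yields $\tilde w^{k}\wto w^{*}$ for some zero $w^{*}$ of $\mathcal{B}=\cC^{*}\parcomp\cA$, and Proposition~\ref{prop:fundamental-estimates-HPE} iv), read in $\DD$, gives $\norm{w^{k}-\tilde w^{k}}\to 0$, whence $w^{k}\wto w^{*}$; this is the first claim. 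For the second, evaluating the resolvent identity at the fixed point $w^{*}=(I+\mathcal{B})^{-1}w^{*}$ gives $w^{*}=\cC^{*}u^{*}$ with $u^{*}:=(\cM+\cA)^{-1}\cC w^{*}$. The definition of $u^{*}$ means $\cC w^{*}-\cM u^{*}\in\cA u^{*}$, and since $\cM u^{*}=\cC\cC^{*}u^{*}=\cC w^{*}$ we conclude $0\in\cA u^{*}$.

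Finally, under Lipschitz continuity of $(\cM+\cA)^{-1}$ I would establish $\tu^{k}\wto u^{*}$ by a cluster-point argument, which has the advantage of never touching the possibly unbounded sequences $\seq{u^{k}},\seq{v^{k}}$. From $\cC z^{k+1}\in\cA\tu^{k+1}$ one obtains $\tu^{k+1}=(\cM+\cA)^{-1}\cC(z^{k+1}+\tilde w^{k+1})$, so Lipschitz continuity together with $z^{k}\to 0$ (Proposition~\ref{prop:fundamental-estimates-HPE} iv) in $\DD$ and $\inf_{k}\lambda_{k}>0$) and the boundedness of $\seq{\tilde w^{k}}$ shows $\seq{\tu^{k}}$ is bounded. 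If $\tu^{k_{j}}\wto\hat u$, then $\cC^{*}\tu^{k_{j}}=\tilde w^{k_{j}}\wto\cC^{*}\hat u$ forces $\cC^{*}\hat u=w^{*}$, while $\cC z^{k_{j}}\to 0$ and the weak-strong closedness of $\Gra\cA$ give $0\in\cA\hat u$; hence $\hat u=(\cM+\cA)^{-1}\cM\hat u=(\cM+\cA)^{-1}\cC w^{*}=u^{*}$. As every weak cluster point equals $u^{*}$, the bounded sequence $\seq{\tu^{k}}$ converges weakly to $u^{*}$. The only nonroutine ingredient is the maximal monotonicity of $\cC^{*}\parcomp\cA$; the remainder is a translation into $\DD$ plus the results already proved.
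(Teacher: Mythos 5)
Your proof is correct, but it takes a genuinely different route from the paper's. The paper goes \emph{up}: it lifts the reduced sequences into $\HH$ by choosing $u^{k},v^{k}$ with $\cC^{*}u^{k}=w^{k}$, $\cC^{*}v^{k}=z^{k}$, observes that these satisfy \eqref{eq:HPE-inclusion}--\eqref{eq:HPE-update} with the degenerate preconditioner $\cM=\cC\cC^{*}$, cites \cite[Theorem 2.13]{BCLN22} for the maximal monotonicity of $\cC^{*}\parcomp\cA$, redoes the Opial/Fej\'er argument by hand to upgrade subsequential to full weak convergence of $\seq{w^{k}}$, and finally invokes Theorem~\ref{thm:weak-conv-HPE} in $\HH$ to handle $\seq{\tu^{k}}$. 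You go \emph{down}: you read the reduced scheme as a plain HPE iteration in $\DD$ with identity preconditioner for $\mathcal{B}=\cC^{*}\parcomp\cA$, so Theorem~\ref{thm:weak-conv-HPE} applied in $\DD$ (where the Lipschitz hypothesis comes for free, the resolvent of a maximal monotone operator being nonexpansive) yields convergence of $\seq{\tilde w^{k}}$ and hence of $\seq{w^{k}}$ via Proposition~\ref{prop:fundamental-estimates-HPE} iv), with no need to repeat the Opial argument. Your key extra ingredient is the resolvent identity $(I+\mathcal{B})^{-1}=\cC^{*}(\cM+\cA)^{-1}\cC$ --- precisely the iteration map of \eqref{eq:reduced-PPP} --- which together with Minty's theorem gives a self-contained proof of maximal monotonicity of the parallel composition, replacing the paper's external citation; it also makes the identification $w^{*}=\cC^{*}u^{*}$ and $0\in\cA u^{*}$ a one-line computation. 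Your final cluster-point argument for $\seq{\tu^{k}}$ is likewise more economical than the paper's, and it deliberately avoids the lifted sequences $\seq{u^{k}},\seq{v^{k}}$, which the paper itself notes may be unbounded. One shared caveat: your step $\Ran\cC=\Ran\cM$ (needed so that $(\cM+\cA)^{-1}\cC w$ is well defined for all $w\in\DD$) requires $\cC^{*}$ to be onto, i.e.\ that $\cM=\cC\cC^{*}$ is an onto decomposition; the theorem's hypothesis only states injectivity of $\cC$, but the paper's own proof needs the same property to define $u^{k}$ with $\cC^{*}u^{k}=w^{k}$ for arbitrary $w^{0}\in\DD$, so this is an imprecision of the statement rather than a gap in your argument.
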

\begin{proof}
  We denote $\cM = \cC\cC^{*}$ and define the sequences $\seq{u^{k}}, \seq{v^{k}}$ with $\cC^{*}u^{k} = w^{k}$, $\cC^{*}v^{k}=z^{k}$. The iteration~\eqref{eq:reducedHPE-inclusion}-\eqref{eq:reducedHPE-update} is similar to the unreduced iteration~\eqref{eq:HPE-inclusion}-\eqref{eq:HPE-update} since there one accesses the variables $u^{k}$ and $v^{k}$ only via $\cC^{*}u^{k}$ and $\cC^{*}v^{k}$, respectively. Proposition~\ref{prop:fundamental-estimates-HPE} shows that the sequence $\seq{w^{k} = \cC^{*}u^{k}}$ is bounded and hence we have a weak cluster point $w^{*}$ and a subsequence $\seq{w^{k_{j}}}$ weakly converging to it. From~\eqref{eq:reducedHPE-inclusion} we have $\cC z^{k_{j}}\in \cA \tu^{k_{j}}$ which implies $\cC^{*}\tu^{k_{j}}\in \cC^{*}\cA^{-1}\cC z^{k_{j}}$ and this means that 
\begin{align*}
z^{k_{j}}\in (\cC^{*}\cA^{-1}\cC)^{-1}\cC^{*}\tu^{k_{j}} = (\cC^{*}\parcomp \cA)\cC^{*}\tu^{k_{j}}.
\end{align*}
From Proposition~\ref{prop:fundamental-estimates-HPE} we also get that 
\begin{align*}
\norm{\lambda_{k}z^{k}} = \norm{\lambda_{k}\cC^{*}v^{k}}\to 0\quad\text{and}\quad \norm{\cC^{*}u^{k_{j}}-w^{k_{j}}} = \norm{\cC^{*}u^{k_{j}}-\cC^{*}\tu^{k_{j}}}\to 0.
\end{align*}
since $\inf \lambda_{k}>0$ we get that $z^{k}\to 0$ and we also have $\cC^{*}\tu^{k_{j}}\wto w^{*}$.
Now Theorem 2.13 in~\cite{BCLN22} shows that $\cC^{*}\parcomp \cA$ is maximal monotone if $\cA$ is monotone and especially it has weak-strong closed graph. Hence from $j\to\infty$ we conclude $0\in (\cC^{*}\parcomp \cA)w^{*}$.

Let us now show that the full sequence $\seq{w^{k}}$ converges weakly: From Proposition~\ref{prop:fundamental-estimates-HPE} we get that $\norm{w^{k}-w}$ is decreasing for any zero of $\cC^{*} \parcomp \cA$ and hence, has a limit which we call $\ell(w)$. Now consider two subsequences $\seq{w^{k_{j}}}$ and $\seq{w^{l_{j}}}$ with weak limits $w^{*}$ and $w^{**}$, respectively. If we assume $w^{*}\neq w^{**}$ we get from Opial's lemma
\begin{align*}
  \liminf_{j\to\infty}\norm{w^{k_{j}}-w^{*}}_{\cM}&<\liminf_{j\to\infty}\norm{w^{k_{j}}-w^{**}}_{\cM}\\
  \liminf_{j\to\infty}\norm{w^{l_{j}}-w^{**}}_{\cM}&<\liminf_{j\to\infty}\norm{w^{l_{j}}-w^{*}}_{\cM}
\end{align*}
which implies $\ell(w^{*})<\ell(w^{**})$ and $\ell(w^{**})<\ell(w^{*})$ which is a contradiction. Thus $w^{*}=w^{**}$, i.e. we only have one cluster point of the full sequence.

Now assume that $(\cC\cC^{*} + \cA)^{-1}$ is Lipschitz. Theorem~\ref{thm:weak-conv-HPE} shows that $\seq{\tu^{k}}$ converges weakly to some $u^{**}$ which is a zero of $\cA$. Thus, since $C$ is linear and bounded we get that $\seq{\cC^{*}\tu^{k}}$ converges weakly to $\cC^{*}u^{**}$. From Proposition~\ref{prop:fundamental-estimates-HPE} (i) we get 
\begin{align*}
\norm{\cC^{*}\tu^{k+1}-w^{k}} = \norm{\cC^{*}\tu^{k+1}-\cC^{*}u^{k}} = \norm{\tu^{k+1}-u^{k}}_{\cM}\leq \tfrac1{1-\sigma}\norm{\lambda_{k+1}v^{k+1}}_{\cM}
\end{align*}
and the right hand side converges to zero. Hence, the weak limit of $\seq{\cC^{*}\tu^{k+1}}$ has to be $w^{*}$. Hence, we conclude $\cC^{*}u^{**} = w^{*}$ and 
\begin{align*}
u^{**} = (\cM+\cA)^{-1}\cM u^{**} = (\cM+\cA)^{-1}\cC\cC^{*}u^{**} = (\cM+\cA)^{-1}\cC w^{*} = u^{*}.
\end{align*}
\end{proof}

\begin{remark} The theoretical guarantees provided in our work are in line with the related literature on proximal-point type methods as well as on degenerate preconditioned methods (see, e.g., \cite{solodov1999,solodov2000, alves2020, eckstein2018, eckstein2013, BCLN22}). Convergence rates for sequences generated by proximal point type methods can be proved, in general, under stronger assumptions on the maximal monotone operator as, for instance, strong monotonicity \cite{AS2016, alves2016}. Convergence rates for the residuals (in terms of enlargements) were obtained in \cite{monteiro2010}. One could follow the latter work of Monteiro and Svaiter and try to prove convergence rates for the residuals of the outer iteration, but this is beyond the scope of the present paper. On the other hand, it is not clear how to prove rates when we consider outer and inner iterations, since we can't predict how many inner iterations will be required to completed each inner loop. The situation was somehow improved for a relative-error version of the
Douglas-Rachford method in \cite{AlvesGeremia} where a tolerance was prescribed for each inner loop (see equation (25) in Algorithm 3 of \cite{AlvesGeremia}). Instead of following this idea and introducing a different method we wanted to keep the analysis and exposition as simple as possible.
\end{remark}

\section{Applications to splitting methods}
\label{sec:applications}

In this section we consider the problem of finding a zero in the sum of maximal monotone operators. The aim of the section is to show that the degenerate HPE framework can be applied in the context of splitting methods, deriving new inexact schemes together with convergence results. We start by recovering the Eckstein-Yao method, which is an inexact version of the celebrated Douglas-Rachford method, continue with a new inexact version of the Chambolle-Pock method and conclude the section with a new inexact version of the Davis-Yin algorithm. We also comment about the fact that, thanks to the general framework we developed, many more methods can be included.

\subsection{Inexact Douglas-Rachford: retrieving Eckstein-Yao algorithm using the degenerate HPE framework}
\label{sec:DR}

In this paragraph we make use of the degenerate HPE framework to retrieve the inexact version of the Douglas-Rachford method introduced by Eckstein and Yao in \cite{eckstein2018}, later extended in \cite{SvaiterInexactDR} and further analyzed in \cite{AlvesGeremia}. The method is suited to find solutions for problems of the form
\begin{equation}\label{eq:2op_problem}
\text{find } x \in H \quad \text{such that } \quad 0\in A_1 x + A_2 x,
\end{equation}
where $A_1,A_2$ are maximal monotone operators on the real Hilbert space $H$. Let introduce the operators
\begin{equation}\label{eq:A_block_DR}
	\mathcal{A} := 
	\begin{bmatrix} \tau A_1 & I & -I\\
		-I &  \tau A_2 & I\\
		I & -I &  0
	\end{bmatrix}, \ \mathcal{M} :=
	\begin{bmatrix} I & -I & I\\
		-I & I & -I\\
		I & -I & I
	\end{bmatrix}, \ \mathcal{C} :=
\begin{bmatrix}
I \\
-I\\
I
\end{bmatrix},
\end{equation}
and consider $\HH:=H^3$. It was shown in \cite{BCLN22} and \cite{bredies2022graph} that the problem of finding $u=(x_1,x_2,v)\in \HH$ such that $0\in \cA u$ is equivalent to find a solution of \eqref{eq:2op_problem}. In particular, if $u=(x_1,x_2,v)\in \HH$ satisfies $0\in \cA u$, then $x_1=x_2$ and $x=x_1=x_2$ solves \eqref{eq:2op_problem}. On the other hand, if $x\in H$ solves \eqref{eq:2op_problem} then there exists $v\in H$ such that $u=(x,x,v)$ satisfies $0\in\cA u$.

Applying the reduced HPE method (Algorithm \ref{alg:HPE_r}) using the operator $\cA$, the preconditioner $\cM=\cC\cC^*$ and $\lambda_k \equiv 1$, we obtain Algorithm \ref{alg:HPE_EY}. In fact, Step 4 of the algorithm guarantees that
\begin{equation*}
\widetilde{x}_1^{k+1}-\tau a_1^{k+1}\in\widetilde{x}_2^{k+1}+\tau A_2 \widetilde{x}_2^{k+1},
\end{equation*}
and so
\begin{equation}\label{eq:HPE_EY_1}
	0\in -\widetilde{x}_1^{k+1}+\widetilde{x}_2^{k+1}+\tau A_2 \widetilde{x}_2^{k+1}+\tau a_1^{k+1}.
\end{equation}
Defining now $\widetilde{q}^{k+1}:=\tau a_1^{k+1} +2 \widetilde{x}_2^{k+1}-\widetilde{x}_1^{k+1}$, we have
\begin{equation}\label{eq:HPE_EY_2}
\widetilde{x}_1^{k+1}-\widetilde{x}_2^{k+1} =  \tau a_1^{k+1} + \widetilde{x}_2^{k+1}-\widetilde{q}^{k+1},
\end{equation}
and subtracting \eqref{eq:HPE_EY_2} from \eqref{eq:HPE_EY_1} we obtain
\begin{equation}\label{eq:HPE_EY_3}
\widetilde{x}_2^{k+1}-\widetilde{x}_1^{k+1}\in \tau A_2 \widetilde{x}_2^{k+1}-\widetilde{x}_1^{k+1} + \widetilde{q}^{k+1}.
\end{equation}
Combining \eqref{eq:HPE_EY_2} and \eqref{eq:HPE_EY_3} and introducing $z^{k+1}:=\widetilde{x}_1^{k+1}-\widetilde{x}_2^{k+1}$, we have found an element $\widetilde{u}^{k+1}=(\widetilde{x}_1^{k+1},\widetilde{x}_2^{k+1},\widetilde{q}^{k+1})\in H^3$ and an element $z^{k+1}\in H$, such that
\[\begin{pmatrix}
	z^{k+1}\\
	-z^{k+1}\\
	z^{k+1}
\end{pmatrix}\in 	\begin{bmatrix} \tau A_1 & I & -I\\
-I  &  \tau A_2 & I\\
I & -I &  0
\end{bmatrix} \begin{pmatrix}
	\widetilde{x}_1^{k+1}\\
	\widetilde{x}_2^{k+1}\\
	\widetilde{q}^{k+1}
\end{pmatrix},\]
which coincides with the inclusion $\cC z^{k+1}\in \cA \widetilde{u}^{k+1}$. The control in Step 5 of Algorithm \ref{alg:HPE_EY} guarantees the inequality in Step 4 of Algorithm \ref{alg:HPE_r}. In fact, the step give us
\begin{equation}\label{eq:HPE_EY_4}
\|\widetilde{x}_1^{k+1}+\tau a_1^{k+1}-w^{k}\|\leq \sigma \|\widetilde{x}_2^{k+1}+\tau a_1^{k+1}-w\|.
\end{equation}
From \eqref{eq:HPE_EY_2} and the definition of $z^{k+1}$, we have $\tau a_1^{k+1}+\widetilde{x}_2^{k+1}=\widetilde{q}^{k+1}+\widetilde{x}_1^{k+1}-\widetilde{x}_2^{k+1}$ and $z^{k+1}=\widetilde{x}_1^{k+1}-\widetilde{x}_2^{k+1}$, then
\[\|\widetilde{x}_1^{k+1}+\tau a_1^{k+1}-w^{k}\|=\|z^{k+1}+\widetilde{x}_1^{k+1}-\widetilde{x}_2^{k+1}+\widetilde{q}^{k+1}-w^k\|=\|z^{k+1}+\cC^* \widetilde{u}^{k+1}-w^{k}\|\]
and
\[\|\widetilde{x}_2^{k+1}+\tau a_1^{k+1}-w^k\|=\|\widetilde{x}_1^{k+1}-\widetilde{x}_2^{k+1}+\widetilde{q}^{k+1}-w^k\|=\|\cC^*\widetilde{u}^{k+1}-w^k\|.\]
So that \eqref{eq:HPE_EY_4} coincides with the inequality in Step 4 of Algorithm \ref{alg:HPE_r}.
From the definition of $\widetilde{w}^k$ and $z^{k+1}$, the update in Step 9 of Algorithm \ref{alg:HPE_EY} coincides with the one in Step 7 of Algorithm \ref{alg:HPE_r}.

Including this method in the degenerate HPE framework allow us to immediately retrieve the convergence result in \cite{eckstein2018}.

\mgap

\begin{theorem}
    Let $A_1,A_2$ be maximal monotone operators such that problem \eqref{eq:2op_problem} admits a solution. Let $\seq{\widetilde{x}_1^{k}}$ and 
    $\seq{\widetilde{x}_2^{k}}$ be sequences generated by Algorithm \ref{alg:HPE_EY} (where we suppose that the inner loop always end in a finite number of iterations). Then $\seq{\widetilde{x}_1^{k}}$ and $\seq{\widetilde{x}_2^{k}}$ converge weakly to a solution of \eqref{eq:2op_problem}.
\end{theorem}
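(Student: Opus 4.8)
The plan is to recognize Algorithm \ref{alg:HPE_EY} as a concrete realization of the reduced preconditioned HPE iteration \eqref{eq:reducedHPE-inclusion}--\eqref{eq:reducedHPE-update} for the data $\cA$, $\cM$, $\cC$ of \eqref{eq:A_block_DR} with $\lambda_k\equiv 1$, and then simply to invoke the convergence theorem for that reduced iteration. The calculations preceding the statement already verify, step by step, that the inclusion $\cC z^{k+1}\in\cA\tu^{k+1}$ and the error test in Step 4 of Algorithm \ref{alg:HPE_r} are exactly what Algorithm \ref{alg:HPE_EY} enforces, so the correspondence of the two iterations is in hand. The only remaining work is to check the hypotheses of the reduced HPE theorem: $\cA$ maximal monotone, $\cM=\cC\cC^{*}$ an admissible preconditioner of $\cA$, existence of a zero of $\cA$, and --- crucially for the convergence of $\seq{\tu^{k}}$ and not merely of $\seq{w^{k}}$ --- Lipschitz continuity of $(\cM+\cA)^{-1}$. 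The hypothesis that the inner loop terminates in finitely many steps is what guarantees that the sequences $\seq{w^{k}},\seq{z^{k}},\seq{\tu^{k}}$ are well defined in the first place.

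Maximal monotonicity of $\cA$ I would take from \cite{BCLN22,bredies2022graph}: the constant off-diagonal block of $\cA$ is skew-symmetric, hence bounded, everywhere-defined and monotone, and adding it to the maximal monotone diagonal $\mathrm{diag}(\tau A_1,\tau A_2,0)$ preserves maximal monotonicity. Existence of a zero of $\cA$ follows from the equivalence recalled just before the statement: a solution $x$ of \eqref{eq:2op_problem} produces $v\in H$ with $(x,x,v)$ a zero of $\cA$.

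The main point --- though it turns out to be short --- is verifying that $(\cM+\cA)^{-1}$ is single-valued, has full domain, and is Lipschitz. Here I would compute $\cM+\cA$ explicitly and observe that it is block lower triangular,
\[
\cM+\cA=\begin{bmatrix} I+\tau A_1 & 0 & 0\\ -2I & I+\tau A_2 & 0\\ 2I & -2I & I\end{bmatrix}.
\]
Solving $(\cM+\cA)(x_1,x_2,q)=(b_1,b_2,b_3)$ by forward substitution gives $x_1=J_{\tau A_1}b_1$, then $x_2=J_{\tau A_2}(b_2+2x_1)$, and finally $q=b_3-2x_1+2x_2$. Since the resolvents $J_{\tau A_i}=(I+\tau A_i)^{-1}$ are firmly nonexpansive (in particular single-valued, full-domain and $1$-Lipschitz) and the remaining operations are affine, the map $(b_1,b_2,b_3)\mapsto(x_1,x_2,q)$ is single-valued, defined on all of $\HH$, and globally Lipschitz. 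This simultaneously certifies that $\cM$ is an admissible preconditioner of $\cA$, since $(\cM+\cA)^{-1}\cM$ is then single-valued with full domain.

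With every hypothesis in place, the reduced HPE convergence theorem applies and yields that $\seq{\tu^{k}}$ converges weakly to $u^{*}=(\cM+\cA)^{-1}\cC w^{*}$, a zero of $\cA$. Writing $\tu^{k}=(\widetilde{x}_1^{k},\widetilde{x}_2^{k},\widetilde{q}^{k})$ and $u^{*}=(x^{*},x^{*},v^{*})$ --- the equal first two coordinates being dictated by the structure of zeros of $\cA$ recalled above --- and using that the coordinate projections are bounded linear maps, hence weakly continuous, I obtain $\widetilde{x}_1^{k}\wto x^{*}$ and $\widetilde{x}_2^{k}\wto x^{*}$ with $x^{*}$ solving \eqref{eq:2op_problem}. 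I expect the only delicate bookkeeping to be confirming that both coordinate sequences share the common weak limit $x^{*}$, which is exactly the content of the equivalence between zeros of $\cA$ and solutions of \eqref{eq:2op_problem}.
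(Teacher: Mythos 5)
Your proposal is correct and follows essentially the same route as the paper: recognize Algorithm~\ref{alg:HPE_EY} as the reduced preconditioned HPE iteration (Algorithm~\ref{alg:HPE_r}) for the operators in \eqref{eq:A_block_DR} with $\lambda_k\equiv 1$, and then invoke the reduced HPE convergence theorem together with the equivalence between zeros of $\cA$ and solutions of \eqref{eq:2op_problem}. The one difference is that the paper leaves the hypotheses (maximal monotonicity of $\cA$, admissibility of $\cM$, Lipschitz continuity of $(\cM+\cA)^{-1}$) to citations of \cite{BCLN22,bredies2022graph}, whereas your forward-substitution computation exploiting the block lower triangular form of $\cM+\cA$ verifies them explicitly and correctly, making the argument self-contained.
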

\begin{proof}
    Let $\cA$, $\cM$ and $\cC$ as in \eqref{eq:A_block_DR}. Then, as discussed in \cite[Section 3]{BCLN22}, the operator $\cA$ is maximal monotone, $\cM=\cC\cC^*$ is an admissible preconditioner for $\cA$ and $(\cM+\cA)^{-1}$ is Lipschitz continuous. Then, applying Theorem \ref{thm:convergence_reduced}, we obtain $(\widetilde{x}_1^{k}, \widetilde{x}_2^{k}, \widetilde{q}^{k})=\widetilde{u}^k \wto u^*$, with $u^*$ solution to $0\in \cA u$. This implies the thesis.
\end{proof}

\begin{algorithm}[t]
\setstretch{1.1}
\begin{algorithmic}[1]
	\STATE \textbf{Initialize:} $w^0\in H$ and $\sigma \in [0,1)$
	\FOR{$k=1, 2 \dots$}
	\STATE Define $(\widetilde{x}_1^{k+1},a_1^{k+1})$ such that $a_1^{k+1}\in A_1 \widetilde{x}_1^{k+1}$ fulfill \[\tau a_1^{k+1}+\widetilde{x}_1^{k+1} \approx w^{k}.\]\vspace*{-\baselineskip}
	\STATE Find $(\widetilde{x}_2^{k+1},a_2^{k+1})$ such that $a_2^{k+1}\in A_2 \widetilde{x}_2^{k+1}$ and \[\tau a_2^{k+1}+\widetilde{x}_2^{k+1}=\widetilde{x}_1^{k+1}-\tau a_1^{k+1}.\] (We compute $\widetilde{x}_2^{k+1}=J_{\tau A_2}(\widetilde{x}_1^{k+1}-a_1^{k+1})$ and take\\ $\tau a_2^{k+1}=\widetilde{x}_1^{k+1}-\tau a_1^{k+1}-\widetilde{x}_2^{k+1}$.)
	\WHILE{$\|\widetilde{x}_1^{k+1}+\tau a_1^{k+1}-w^{k}\|> \sigma \|\widetilde{x}_2^{k+1}+\tau a_1^{k+1}-w^{k}\|$ }
        \STATE  Improve $(\widetilde{x}_1^{k+1},a_1^{k+1})$ with $a_1^{k+1}\in A_1 \widetilde{x}_1^{k+1}$ to reduce\\ $\norm{\widetilde{x}_1^{k+1} + \tau a_1^{k+1} - w^{k}}$.
	\STATE Similar to step 4 find $(\widetilde{x}_2^{k+1},a_2^{k+1})$ such that $a_2^{k+1}\in A_2 \widetilde{x}_2^{k+1}$ and \[\tau a_2^{k+1}+\widetilde{x}_2^{k+1}=\widetilde{x}_1^{k+1}-\tau a_1^{k+1}.\]
        \ENDWHILE
	\STATE Update
        \[
	w^{k+1}=w^{k}-\tau (a_1^{k+1}+a_2^{k+1}),\]
	(or, equivalently, $w^{k+1}=w^{k}+\widetilde{x}_2^{k+1}-\widetilde{x}_1^{k+1}$).
	\ENDFOR
\end{algorithmic}
	\caption{The method introduced by Eckstein and Yao in \cite{eckstein2018}.}\label{alg:HPE_EY}
\end{algorithm}

\subsection{Chambolle-Pock aka PDHG}
\label{sec:CP}

In this section we are interested in solving problems of the form
\begin{align}
  \label{eq:CP-inclusion}
  \text{find } x \in H \quad \text{such that } \quad 0\in A_1 (x) + K^* A_2 K x,
\end{align}
where $H_1,H_2$ are two real Hilbert spaces, $A_1:H_1 \to 2^{H_1}$, $A_2: H_2 \to 2^{H_2}$ are maximal monotone operators and $K: H_1 \to H_2$ is a linear bounded operator. The problem is equivalent to finding $(x,y)$ such that $0\in A_1x+ K^*y$ and $0 \in - K x + A_2^{-1}y$. For this reason, we consider the operators
\begin{equation}\label{eq:A_block_CP}
\cA = \begin{bmatrix}
    A_1 & K^*\\
    -K & A_2^{-1}
\end{bmatrix}, \quad \cM = \begin{bmatrix}
    \frac{1}{\tau}I & -K^*\\
    -K & \frac{1}{\theta} I
\end{bmatrix},\end{equation}
the space $\HH:=H_1\times H_2$ and the corresponding problem
\begin{equation}\label{eq:CP_KKT}
\text{find } u=(x,y) \in \HH \quad \text{such that } \quad 0\in \cA u.
\end{equation}
The (degenerate) preconditioned HPE method defined using the operator $\cA$, the preconditioner $\cM$ and $\lambda_k \equiv 1$, writes as Algorithm \ref{alg:HPE_CP}. In fact, using step 3 and 4 of the algorithm we find $\widetilde{u}^{k+1}=(\widetilde{x}^{k+1},\widetilde{y}^{k+1})$ and if we define
\[\begin{aligned}
    v_1^{k+1}&:=\tau a_1^{k+1}+\tau K^{*}y^k\\
    v_2^{k+1}&:= y^k-\widetilde{y}^{k+1},
\end{aligned}\]
the variable $v^{k+1}=(v_1^{k+1},v_2^{k+1})$ satisfies $\cM v^{k+1}\in \cA \widetilde{u}^{k+1}$.
In fact, 
\[\begin{aligned}
    \begin{bmatrix}
    \frac{1}{\tau}I & -K^*\\
    -K & \frac{1}{\theta} I
\end{bmatrix}\begin{pmatrix}
    v_1^{k+1}\\
    v_2^{k+1}
\end{pmatrix}&=\begin{pmatrix}
    a_1^{k+1}+K^{*}y^k-K^{*}(y^k-\widetilde{y}^{k+1})\\
    -K(\tau a_1^{k+1}+\tau K^*y^k)+\theta^{-1}(y^k-\widetilde{y}^{k+1})
\end{pmatrix}\\
    &=\begin{pmatrix}
    a_1^{k+1}+K^*\widetilde{y}^{k+1}\\
    -K\widetilde{x}^{k+1}+K(\widetilde{x}^{k+1}-\tau a_1^{k+1}+\tau K^*y^k)+\theta^{-1}(y^k-\widetilde{y}^{k+1})
\end{pmatrix}\\
& \in \begin{pmatrix}
    A_1 \widetilde{x}^{k+1}+K^* \widetilde{y}^{k+1}\\
    -K\widetilde{x}^{k+1}+A_2^{-1}\widetilde{y}^{k+1}
\end{pmatrix}\\
& = \begin{bmatrix}
    A_1 & K^*\\
    -K & A_2^{-1}
\end{bmatrix} \begin{pmatrix}
    \widetilde{x}^{k+1}\\
    \widetilde{y}^{k+1}
\end{pmatrix} = \cA \begin{pmatrix}
    \widetilde{x}^{k+1}\\
    \widetilde{y}^{k+1}
\end{pmatrix}
\end{aligned}
\]

\begin{algorithm}[t]
\setstretch{1.1}
\begin{algorithmic}[1]
	\STATE \textbf{Initialize:} $x^0\in H_1, y^0\in H_2$, $\sigma \in [0,1)$, $\tau \theta \|K\|^2 \leq 1$
	\FOR{$k=1, 2 \dots$}
	\STATE Define $(\widetilde{x}^{k+1},a^{k+1})$ such that $a^{k+1}\in A_1 \widetilde{x}^{k+1}$ fulfill \[\tau a^{k+1}+\widetilde{x}^{k+1} \approx x^{k}-\tau K^*y^k.\]\vspace*{-\baselineskip}
	\STATE Compute $\widetilde{y}^{k+1}=J_{\theta A_2^{-1}}\left(y^k+\theta K\left(\widetilde{x}^{k+1}-\tau (a^{k+1}+K^{*}y^k)\right)\right)$.
	\WHILE{\[\tau^{-1}\|\tau a^{k+1}+\widetilde{x}^{k+1}-(x^k-\tau K^*y^k)\|^2> \sigma^2 \|(\widetilde{x}^{k+1}-x^k,\widetilde{y}^{k+1}-y^k)\|_{\cM}^2\] }
        \STATE Improve $(\widetilde{x}^{k+1},a^{k+1})$ with $a^{k+1}\in A_1 \widetilde{x}^{k+1}$ to reduce\\ $\|\tau a^{k+1}+\widetilde{x}^{k+1}-(x^k-\tau K^*y^k)\|$.
	\STATE Compute $\widetilde{y}^{k+1}=J_{\theta A_2^{-1}}\left(y^k+\theta K\left(\widetilde{x}^{k+1}-\tau (a^{k+1}+K^{*}y^k)\right)\right)$.
        \ENDWHILE
	\STATE Update
        \[\begin{aligned}
        x^{k+1}&=(x^k-\tau K^*y^k)-\tau a^{k+1}\\
        y^{k+1}&=\widetilde{y}^{k+1}.
        \end{aligned}\]
	\ENDFOR
\end{algorithmic}
	\caption{Inexact Chambolle-Pock method.}\label{alg:HPE_CP}
\end{algorithm}

With this choice, it is easy to see that Step 5 of algorithm checks for the condition \eqref{eq:HPE-error-condition} and the update in Step 9 coincides with \eqref{eq:HPE-update}.

Since the introduced algorithm is an example of (degenerate) HPE method, this allows us to retrieve the following convergence result.

\mgap

\begin{theorem}
    Let $A_1:H_1\to 2^{H_1},A_2:H_2\to 2^{H_2}$ be maximal monotone operators and $K:H_1\to H_2$ a linear bounded operator such that problem \eqref{eq:CP-inclusion} admits a solution. Let 
    $\seq{\widetilde{x}^{k}}$ and 
    $\seq{\widetilde{y}^{k}}$ the sequences generated by Algorithm \ref{alg:HPE_EY} (where we suppose that the inner loop always end in a finite number of iterations). Then 
    $\seq{\widetilde{u}^{k}\coloneqq (\widetilde{x}^{k},\widetilde{y}^{k})}$ converges weakly to a solution $u$ of \eqref{eq:CP_KKT}. In particular, $\seq{\widetilde{x}^{k}}$ converges weakly to a solution of \eqref{eq:CP-inclusion}.
\end{theorem}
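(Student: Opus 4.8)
The plan is to recognise Algorithm \ref{alg:HPE_CP} as a concrete instance of the degenerate preconditioned HPE iteration \eqref{eq:HPE-inclusion}-\eqref{eq:HPE-update} for the pair $(\cA,\cM)$ defined above, and then to invoke the abstract weak-convergence result Theorem \ref{thm:weak-conv-HPE}. The identification has essentially been carried out already in the discussion preceding the statement: setting $\tu^{k+1}=(\widetilde{x}^{k+1},\widetilde{y}^{k+1})$ and $v^{k+1}=(v_1^{k+1},v_2^{k+1})$ as there, one has the inclusion $\cM v^{k+1}\in\cA\tu^{k+1}$, which is \eqref{eq:HPE-inclusion}; Step 5 is exactly the error test \eqref{eq:HPE-error-condition} with $\lambda_k\equiv1$ after rewriting the $\cM$-seminorm via \eqref{eq:M-norm-C}; and Step 9 is the extragradient update \eqref{eq:HPE-update}. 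So the iterates form admissible HPE sequences and it remains only to check the hypotheses of Theorem \ref{thm:weak-conv-HPE}.

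Next I would verify those hypotheses in turn. First, $\cA$ is maximal monotone, being the sum of the bounded skew-symmetric operator $(x,y)\mapsto(K^*y,-Kx)$ and the block-diagonal maximal monotone operator $\operatorname{diag}(A_1,A_2^{-1})$. Second, $\cA$ possesses a zero: if $x$ solves \eqref{eq:CP-inclusion} there is $y\in A_2Kx$ with $-K^*y\in A_1x$, and then $u^*=(x,y)$ satisfies $0\in\cA u^*$; conversely, eliminating $y$ from any zero of $\cA$ recovers a solution of \eqref{eq:CP-inclusion}. Third, under the stepsize restriction $\tau\theta\norm{K}^2\le1$ imposed in the algorithm, $\cM$ is positive semidefinite (its Schur complement with respect to the block $\tfrac1\tau I$ is $\tfrac1\theta I-\tau KK^*\succeq0$), and it is an admissible preconditioner for which $(\cM+\cA)^{-1}$ is Lipschitz continuous. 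Finally $\inf_k\lambda_k=1>0$.

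With all hypotheses in place, Theorem \ref{thm:weak-conv-HPE} gives that $\seq{\tu^k}=\seq{(\widetilde{x}^k,\widetilde{y}^k)}$ converges weakly to some $u^*=(x^*,y^*)$ with $0\in\cA u^*$, i.e.\ to a solution of \eqref{eq:CP_KKT}. Since weak convergence in the product $\HH=H_1\times H_2$ implies weak convergence of each coordinate, $\seq{\widetilde{x}^k}\wto x^*$, and by the zero-correspondence above $x^*$ solves \eqref{eq:CP-inclusion}, which is the second assertion.

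The one genuinely delicate point -- and the step I expect to be the main obstacle -- is the admissibility of $\cM$ together with the Lipschitz continuity of $(\cM+\cA)^{-1}$, especially in the degenerate edge case $\tau\theta\norm{K}^2=1$, where $\cM$ has a nontrivial kernel and the degenerate-preconditioner machinery is genuinely needed. Since this has been established in \cite{BCLN22} for precisely this $(\cA,\cM)$, here it suffices to cite that work; the finiteness of the inner loop is an explicit standing hypothesis of the theorem and so needs no separate argument.
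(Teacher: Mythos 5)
Your proposal is correct and follows essentially the same route as the paper: the paper likewise identifies Algorithm \ref{alg:HPE_CP} as the degenerate preconditioned HPE iteration \eqref{eq:HPE-inclusion}--\eqref{eq:HPE-update} for this $(\cA,\cM)$ with $\lambda_k\equiv 1$ (via the same choice of $v^{k+1}$) and then invokes Theorem \ref{thm:weak-conv-HPE}. Your explicit verification of the remaining hypotheses (maximal monotonicity of $\cA$, existence of a zero, positive semidefiniteness of $\cM$ via the Schur complement, and admissibility plus Lipschitz continuity of $(\cM+\cA)^{-1}$ from \cite{BCLN22}) is left implicit in the paper, so you have if anything filled in details the paper omits.
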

\begin{proof}
    Let $\cA$ and $\cM$ as in \eqref{eq:A_block_CP}. Then, as discussed in \cite[Section 3]{BCLN22}, the operator $\cA$ is maximal monotone, $\cM$ is an admissible preconditioner for $\cA$ and $(\cM+\cA)^{-1}$ is Lipschitz continuous. Then, applying Theorem \ref{thm:weak-conv-HPE}, we obtain the thesis.
\end{proof}
\mgap

\begin{remark}
    It is well-known that the Chambolle-Pock method is a generalization of the Douglas-Rachford algorithm. Thus, the new Algorithm \ref{alg:HPE_CP} is a generalization of the Eckstein-Yao method (Algorithm \ref{alg:HPE_EY}). In fact, we can recover of the Eckstein-Yao method when $K=I$ using $\theta=\tau = 1$ and the substitution (reduction of variables) $w^k=x^k+y^k$, for all $k\in \N$.
\end{remark}

\mgap

\begin{remark}
    We recall that the algorithm is well suited to solve problems of the form $\min_x f_1(x)+f_2(Kx)$ with $f_1 \in \Gamma_0(H_1)$, $f_2 \in \Gamma_0(H_2)$ and relative saddle point problems of the form $\min_x \sup_y f_1(x)-f_2^*(y)+\langle Kx, y \rangle$. In fact, the KKT conditions for this kind of problems read as
    \[0\in \begin{bmatrix}
    \partial f_1 & K^*\\
    -K & \partial f_2^*
\end{bmatrix} \begin{pmatrix}
    x\\
    y
\end{pmatrix}. \]
So that the problem is of the form \eqref{eq:CP_KKT} and a solution can be found using Algorithm \ref{alg:HPE_CP}.
\end{remark}

\subsection{Inexact Davis-Yin method}
\label{sec:DY}
Making use of the generality of the degenerate HPE framework, we introduce in this section a new inexact algorithm. The algorithm we introduce is an inexact version of the so-called Davis-Yin method \cite{davis2017} (also called three operator splitting or forward Douglas-Rachford \cite{Briceo2012,Raguet2013,Raguet2019}). An inexact version of the Davis-Yin algorithm is introduced in \cite{ZTC18}, but relies on summable error sequences while our method uses a relative-error criterion. The method is suited to find solution for problems of the form 
\begin{equation}\label{eq:3op_problem}
\text{find } x \in H \quad \text{such that } \quad 0\in A_1x + A_2x + Bx,
\end{equation}
where $A_1,A_2,B$ are maximal monotone operators and $B$ is $1/\beta$-cocoercive. In order to introduce the algorithm, we follow \cite{BCLN22} and introduce similar operators to the ones in \cite[Section 3]{BCLN22}. Let
\begin{equation}\label{eq:A_block_DY}
	\mathcal{A} := 
	\begin{bmatrix} \alpha I + \tau A_1 & I & -I\\
		-I-2\alpha I + \tau B & \alpha I + \tau A_2 & I\\
		I & -I &  0
	\end{bmatrix}, \ \mathcal{M} :=
	\begin{bmatrix} I & -I & I\\
		-I & I & -I\\
		I & -I & I
	\end{bmatrix}, \ \mathcal{C} :=
\begin{bmatrix}
I \\
-I\\
I
\end{bmatrix}.
\end{equation}
Defining the parameter $\gamma=\frac{\tau}{1+\alpha}$ and $\widetilde{w}^{k}=\frac{w^k}{1+\alpha}$, for all $k\in \N$, we have that the reduced HPE method (Algorithm \ref{alg:HPE_r}) defined using the operator $\cA$, the preconditioner $\cM=\cC\cC^*$ and the choice $\lambda_k \equiv 1$, writes as Algorithm \ref{alg:HPE_DY}. 

\begin{algorithm}[t]
\setstretch{1.1}
	\begin{algorithmic}[1]
		\STATE \textbf{Initialize:} $w^0\in H$ and $\sigma \in [0,1)$, $\gamma \in \left(0,\frac{2}{\beta}\right)$, $\alpha = \frac{\gamma \beta}{4-\gamma \beta}$.
		\FOR{$k=1, 2 \dots$}
		\STATE First define $(\widetilde{x}_1^{k+1},a_1^{k+1})$ such that $a_1^{k+1}\in A_1 \widetilde{x}_1^{k+1}$ and \[\gamma a_1^{k+1}+\widetilde{x}_1^{k+1} \approx \widetilde{w}^{k}\]\vspace*{-\baselineskip}.
		\STATE Then take
		\[\widetilde{x}_2^{k+1}=J_{\gamma A_2}\left(\widetilde{x}_1^{k+1}-\gamma a_1^{k+1}-\gamma B\widetilde{x}_1^{k+1}\right).\]
		\WHILE{$\|\widetilde{x}_1^{k+1}+\gamma a_1^{k+1}-\widetilde{w}^{k}\| > \sigma\left\|\frac{\alpha \widetilde{x}_1^{k+1} + \widetilde{x}_2^{k+1}}{1+\alpha} + \gamma a_1^{k+1} - \widetilde{w}^{k}\right\|$}
		\STATE Improve $(\widetilde{x}_1^{k+1},a_1^{k+1})$ with $a_1^{k+1}\in A_1 \widetilde{x}_1^{k+1}$ to reduce\\ $\norm{\widetilde{x}_1^{k+1} + \gamma a_1^{k+1} - \widetilde{w}^{k}}$.
                \STATE Then take
		\[\widetilde{x}_2^{k+1}=J_{\gamma A_2}\left(\widetilde{x}_1^{k+1}-\gamma a_1^{k+1}-\gamma B\widetilde{x}_1^{k+1}\right).\]
                \ENDWHILE
		\STATE Update
                \[
		\widetilde{w}^{k+1}=\widetilde{w}^{k}+\frac{1}{1+\alpha}(\widetilde{x}_2^{k+1}-\widetilde{x}_1^{k+1}).\]
		\ENDFOR
	\end{algorithmic}
	\caption{Inexact version of the Davis-Yin algorithm.}\label{alg:HPE_DY}
\end{algorithm}

In fact, Step 4 guarantees that
\begin{equation*}
(1+\alpha)\widetilde{x}_1^{k+1}-(1+\alpha)\gamma a_1^{k+1}-(1+\alpha)\gamma B \widetilde{x}_1^{k+1}\in(1+\alpha)\widetilde{x}_2^{k+1}+(1+\alpha)\gamma A_2 \widetilde{x}_2^{k+1}.
\end{equation*}
Using the fact that $\tau=(1+\alpha)\gamma$ and rearranging, we obtain
\begin{equation}\label{eq:HPE_DY_1}
	0\in-(1+\alpha)\widetilde{x}_1^{k+1}(1+\alpha)\widetilde{x}_2^{k+1}+\tau A_2 \widetilde{x}_2^{k+1}+\tau a_1^{k+1}+\tau B \widetilde{x}_1^{k+1}-(1+\alpha)\widetilde{x}_1^{k+1}.
\end{equation}
Defining now $\widetilde{q}^{k+1}:=\alpha \widetilde{x}_1^{k+1} +\tau a_1^{k+1} +2 \widetilde{x}_2^{k+1}-\widetilde{x}_1^{k+1}$, we have
\begin{equation}\label{eq:HPE_DY_2}
\widetilde{x}_1^{k+1}-\widetilde{x}_2^{k+1} = \alpha \widetilde{x}_1^{k+1} +\tau a_1^{k+1} + \widetilde{x}_2^{k+1}-\widetilde{q}^{k+1},
\end{equation}
and subtracting \eqref{eq:HPE_DY_2} from \eqref{eq:HPE_DY_1} we obtain
\begin{equation}\label{eq:HPE_DY_3}
\widetilde{x}_2^{k+1}-\widetilde{x}_1^{k+1}\in \alpha \widetilde{x}_2^{k+1} + \tau A_2 \widetilde{x}_2^{k+1}-\widetilde{x}_1^{k+1}-2\alpha \widetilde{x}_1^{k+1} +\tau B \widetilde{x}_1^{k+1} + \widetilde{q}^{k+1}.
\end{equation}
Combining \eqref{eq:HPE_DY_2} and \eqref{eq:HPE_DY_3} and introducing $z^{k+1}:=\widetilde{x}_1^{k+1}-\widetilde{x}_2^{k+1}$, we have found an element $\widetilde{u}^{k+1}=(\widetilde{x}_1^{k+1},\widetilde{x}_2^{k+1},\widetilde{q}^{k+1})\in H^3$ and an element $z^{k+1}\in H$, such that
\[\begin{pmatrix}
	z^{k+1}\\
	-z^{k+1}\\
	z^{k+1}
\end{pmatrix}\in 	\begin{bmatrix} \alpha I + \tau A_1 & I & -I\\
-I-2\alpha I + \tau B & \alpha I + \tau A_2 & I\\
I & -I &  0
\end{bmatrix} \begin{pmatrix}
	\widetilde{x}_1^{k+1}\\
	\widetilde{x}_2^{k+1}\\
	\widetilde{q}^{k+1}
\end{pmatrix},\]
which is exactly $\cC z^{k+1}\in \cA \widetilde{u}^{k+1}$. The control in Step 5 of Algorithm \ref{alg:HPE_DY} guarantees the inequality in Step 4 of Algorithm \ref{alg:HPE_r}. In fact, the step give us
\begin{equation}\label{eq:HPE_DY_4}
\|(1+\alpha)\widetilde{x}_1^{k+1}+\tau a_1^{k+1}-w^{k}\|\leq \sigma \|\alpha\widetilde{x}_1^{k+1}+\widetilde{x}_2^{k+1}+\tau a_1^{k+1}-w\|.
\end{equation}
From \eqref{eq:HPE_DY_2} and the definition of $z^{k+1}$, we have $\tau a_1^{k+1}+\alpha\widetilde{x}_1^{k+1}+\widetilde{x}_2^{k+1}=\widetilde{q}^{k+1}+\widetilde{x}_1^{k+1}-\widetilde{x}_2^{k+1}$ and $z^{k+1}=\widetilde{x}_1^{k+1}-\widetilde{x}_2^{k+1}$, then
\begin{align*}
    \|(1+\alpha)\widetilde{x}_1^{k+1}+\tau a_1^{k+1}-w^{k}\| & =\|z^{k+1}+\widetilde{x}_1^{k+1}-\widetilde{x}_2^{k+1}+\widetilde{q}^{k+1}-w^k\|\\ & =\|z^{k+1}+\cC^* \widetilde{u}^{k+1}-w^{k}\|
    \end{align*}
and
\[\|\alpha\widetilde{x}_1^{k+1}+\widetilde{x}_2^{k+1}+\tau a_1^{k+1}-w^k\|=\|\widetilde{x}_1^{k+1}-\widetilde{x}_2^{k+1}+\widetilde{q}^{k+1}-w^k\|=\|\cC^*\widetilde{u}^{k+1}-w^k\|.\]
So that \eqref{eq:HPE_DY_4} coincides with the inequality in Step 4 of \eqref{alg:HPE_r}.
From the definition of $\widetilde{w}^k$ and $z^{k+1}$, the update in Step 9 of Algorithm \ref{alg:HPE_DY} coincides with the one in Step 7 of Algorithm \ref{alg:HPE_r}.

Since the introduced algorithm is an example of degenerate HPE method, this allows us to retrieve the following convergence result.

\mgap

\begin{theorem}
    Let $A_1,A_2,B$ be maximal monotone operators such that $B$ is $1/\beta$-cocoercive and problem \eqref{eq:3op_problem} admits a solution. Let 
    $\seq{\widetilde{x}_1^{k}}$ and 
    $\seq{\widetilde{x}_2^{k}}$ sequences generated by Algorithm \ref{alg:HPE_DY} (where we suppose that the inner loop always end in a finite number of iterations). Then $\seq{\widetilde{x}_1^{k}}$ and $\seq{\widetilde{x}_2^{k}}$ converge weakly to a solution of \eqref{eq:3op_problem}.
\end{theorem}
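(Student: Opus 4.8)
The plan is to recognize Algorithm~\ref{alg:HPE_DY} as a concrete realization of the reduced preconditioned HPE method (Algorithm~\ref{alg:HPE_r}) for the operators $\cA$, $\cM=\cC\cC^{*}$ and $\cC$ from~\eqref{eq:A_block_DY} with $\lambda_{k}\equiv 1$, and then to invoke the reduced HPE convergence theorem stated after Algorithm~\ref{alg:HPE_r}. The algebraic part of this identification has essentially been carried out already in the displays~\eqref{eq:HPE_DY_1}--\eqref{eq:HPE_DY_4}: there we produced, at each iteration, a pair $(\widetilde{u}^{k+1},z^{k+1})$ with $\cC z^{k+1}\in\cA\widetilde{u}^{k+1}$ satisfying the relative-error test of Step~4 of Algorithm~\ref{alg:HPE_r} and the update of Step~7. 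Hence the only genuine work left is to check the two structural hypotheses required by that theorem: that $\cM=\cC\cC^{*}$ is an admissible preconditioner of a \emph{maximal monotone} operator $\cA$, and that $(\cM+\cA)^{-1}$ is Lipschitz continuous.

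I expect the verification of maximal monotonicity of $\cA$ to be the crux, since this is exactly where the cocoercivity of $B$ and the precise parameter coupling $\alpha=\gamma\beta/(4-\gamma\beta)$, $\tau=(1+\alpha)\gamma$, $\gamma\in(0,2/\beta)$ enter. First I would write $\cA=\mathcal{D}+\mathcal{L}$, where $\mathcal{D}=\operatorname{diag}(\tau A_{1},\tau A_{2},0)$ is maximal monotone and $\mathcal{L}$ gathers the linear couplings together with the forward term $\tau B$ sitting in the $(2,1)$ block. Computing $\inner{u-u'}{\cA u-\cA u'}$ for $u=(x_{1},x_{2},q)$, the $q$-blocks cancel and the monotonicity of $A_{1},A_{2}$ contributes nonnegative terms, leaving
\[
\alpha\norm{\Delta x_{1}-\Delta x_{2}}^{2}+\tau\inner{\Delta x_{2}}{Bx_{1}-Bx_{1}'},
\]
where $\Delta x_{i}=x_{i}-x_{i}'$. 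Writing $\Delta x_{2}=\Delta x_{1}-(\Delta x_{1}-\Delta x_{2})$ and using the $1/\beta$-cocoercivity of $B$ reduces the question to nonnegativity of the quadratic form $\alpha\norm{p}^{2}-\tau\inner{p}{b}+\tfrac{\tau}{\beta}\norm{b}^{2}$ in $p=\Delta x_{1}-\Delta x_{2}$ and $b=Bx_{1}-Bx_{1}'$; this holds precisely when $\alpha\geq\tau\beta/4$, which the stated choice of $\alpha$ turns into an equality. Since $\mathcal{L}$ is then monotone, single-valued and continuous (as $B$ is $\beta$-Lipschitz), it is maximal monotone, and $\cA=\mathcal{D}+\mathcal{L}$ is maximal monotone by the sum rule. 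That $\cM=\cC\cC^{*}$ is an admissible preconditioner with Lipschitz $(\cM+\cA)^{-1}$ follows from the same structure and can either be checked directly or imported from the analysis of this three-operator construction in~\cite{BCLN22,bredies2022graph}.

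With these hypotheses in place, the reduced HPE theorem yields that $\seq{w^{k}}$ converges weakly to a zero $w^{*}$ of $\cC^{*}\parcomp\cA$ and, since $(\cM+\cA)^{-1}$ is Lipschitz, that $\seq{\widetilde{u}^{k}}=(\widetilde{x}_{1}^{k},\widetilde{x}_{2}^{k},\widetilde{q}^{k})$ converges weakly to $u^{*}=(\cM+\cA)^{-1}\cC w^{*}$, a zero of $\cA$. It then remains to translate this back to~\eqref{eq:3op_problem}, exactly as in the Douglas--Rachford case: the third block of $0\in\cA u^{*}$ forces $x_{1}^{*}=x_{2}^{*}=:x^{*}$, and adding the inclusions coming from the first two blocks cancels the auxiliary variable $q^{*}$ and gives $0\in A_{1}x^{*}+A_{2}x^{*}+Bx^{*}$; the same computation run backwards shows that a solution of~\eqref{eq:3op_problem} produces a zero of $\cA$, so the standing assumption that~\eqref{eq:3op_problem} is solvable supplies the zero of $\cA$ demanded by the theorem. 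Projecting the weak convergence $\widetilde{u}^{k}\wto u^{*}$ onto the first two coordinates finally yields $\widetilde{x}_{1}^{k}\wto x^{*}$ and $\widetilde{x}_{2}^{k}\wto x^{*}$ with $x^{*}$ solving~\eqref{eq:3op_problem}, as claimed.
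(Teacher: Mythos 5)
Your proposal is correct and follows essentially the same route as the paper: the paper's own ``proof'' consists precisely of the identification (via the displays \eqref{eq:HPE_DY_1}--\eqref{eq:HPE_DY_4}) of Algorithm~\ref{alg:HPE_DY} with the reduced HPE method (Algorithm~\ref{alg:HPE_r}) for the operators in \eqref{eq:A_block_DY}, followed by an appeal to the reduced HPE convergence theorem, with the structural facts about $\cA$ and $\cM$ imported from \cite{BCLN22}. The one place you go beyond the paper is welcome rather than divergent: your direct verification of maximal monotonicity of $\cA$ --- reducing it to nonnegativity of $\alpha\norm{p}^{2}-\tau\inner{p}{b}+\tfrac{\tau}{\beta}\norm{b}^{2}$, i.e.\ to $\alpha\geq\tau\beta/4$, which the parameter choice $\alpha=\gamma\beta/(4-\gamma\beta)$, $\tau=(1+\alpha)\gamma$ makes an equality --- is a correct computation of exactly the fact the paper cites from \cite[Section 3]{BCLN22}.
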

\begin{proof}
    Let $\cA$, $\cM$ and $\cC$ as in \eqref{eq:A_block_DY}. Then, as discussed in \cite[Section 3]{BCLN22}, the operator $\cA$ is maximal monotone, $\cM=\cC\cC^*$ is an admissible preconditioner for $\cA$ and $(\cM+\cA)^{-1}$ is Lipschitz continuous. Then, applying Theorem \ref{thm:convergence_reduced}, we obtain $(\widetilde{x}_1^{k}, \widetilde{x}_2^{k}, \widetilde{q}^{k})=\widetilde{u}^k \wto u^*$, with $u^*$ solution to $0\in \cA u$. This implies the thesis.
\end{proof}

\begin{remark}
    The Davis-Yin method comes as a generalization of the Douglas-Rachford algorithm that makes use of an additional forward term. Thus, the new Algorithm \ref{alg:HPE_CP} is a generalization of the Eckstein-Yao method (Algorithm \ref{alg:HPE_EY}), which we recover when $B=0$.
\end{remark}

\mgap

\begin{remark}\label{rem:DY-functions}
    We recall that the algorithm is well suited to solve problems of the form \[\min_x f_1(x)+f_2(x)+g(x),\] with $f_1,f_2,g \in \Gamma_0(H)$ and $g$ with Lipschitz gradient. In fact, under some mild qualification conditions \cite[Theorem 16.47]{BCombettes}, it is possible to recover a problem of the form \eqref{eq:3op_problem} by considering $A_1 = \partial f_1$, $A_2 = \partial f_2$ and $B = \nabla g$.
\end{remark}

\mgap

\begin{remark}
    As already mentioned in the introduction, the degenerate proximal point framework has been shown to include many existing methods. We presented in this section the inexact version of only a few of them. Other examples are: the splitting method introduced by Ryu in \cite{Ryu}, the parallel Douglas-Rachford introduced and analyzed in \cite{RyuYin_parallel, condat, Campoy2022}, the method introduced by Malitsky and Tam in \cite{MalitskyTam2023resolvent} and the sequential method introduced in \cite{bredies2022graph}. Through the framework introduced in the present paper, their inexact counterpart can be easily analyzed. We didn't present here the inexact versions of such algorithms for sake of exposition, however, some generalizations are explicitly reported in \cite{naldi2024thesis}. In a recent paper \cite{Artacho2024}, Aragón-Artacho, Campoy and López-Pastor introduced new graph-based forward-backward methods which are connected to some graph-based extensions of the Davis-Yin method studied in \cite{naldi2024thesis}. All of these methods are based on the degenerate proximal point framework proposed in \cite{BCLN22}. For this reason, the present work also paves the way for possible new inexact generalizations of these algorithms.
\end{remark}

\section{Experiments}

In this section we perform numerical experiments with the goal of illustrating the practical benefits of using approximate resolvents with relative errors. Even though the methodology we developed in this paper is applicable to a wide variety of splitting methods, we will focus our experiments on two cases: The Chambolle-Pock method as described in Section~\ref{sec:CP} and the Davis-yin method from Section~\ref{sec:DY}.

\subsection{Chambolle-Pock}
\label{sec:cp-experiment}

To illustrate the effectiveness of the HPE-version of the Chambolle-Pock method from Section~\ref{sec:CP} we consider the variational problem 
\begin{align}\label{eq:problem-CP}
\min_{x\in\R^{n}} \tfrac12\norm{Hx-f}_{2}^{2} + \lambda\norm{Dx}_{1}
\end{align}
for $H\in\R^{m\times n}$, $f\in \R^{m}$, $\lambda>0$ and $D\in\R^{k\times n}$.
This problem is prototypical for several variational approaches to inverse problems and imaging, e.g. when $H$ is a forward operator, $f$ is given data and the inverse problem is to approximately solve $Hx=f$ under the assumption that $f$ is perturbed by noise and $H$ is ill-conditioned. The parameter $\lambda$ is the regularization parameter and the term $\norm{Dx}_{1}$ is a regularization term that enforces that $Dx$ will be sparse. The famous (isotropic) total variation regularization is a special case of this when $D$ is a matrix of first order finite differences~\cite{rudin1992nonlinear,bredies2018mathematical}.

There are numerous ways to solve this problem with splitting methods. Especially the term $\tfrac12\norm{Hx-f}_2^2$ can be treated in different ways:
\begin{itemize}
\item Doing implicit steps, i.e. evaluating the respective resolvent of $\tfrac12\norm{Hf-f}_{2}^{2}$ exactly. This amount to solving a linear system with the operator $I + \tau H^{T}H$. We will do this in what we call \emph{Chambolle-Pock with implicit steps} or \emph{implicit Chambolle-Pock} and explain the details below.
\item Doing approximate implicit steps by using the method of conjugate gradients (CG) for the solution of the linear systems within the HPE framework as described in Algorithm~\ref{alg:HPE_CP}.
\item Doing forward steps: Since the term is $L$-smooth, one can do forward steps of the form $H^{T}(Hx-f)$. This is, for example, possible within the Condat-Vu method~\cite{condat2013primal,vu2013splitting}.
\item One use a different splitting and  also ``dualize over $H$'' (and $D$ at the same time) and then apply the Chambolle-Pock method. This also avoids solutions of linear systems and we call this method \emph{Chambolle-Pock with explicit steps} or \emph{explicit Chambolle-Pock} and describe the details below.
\end{itemize}

\subsubsection{Chambolle-Pock with implicit steps}
\label{sec:implicit-CP}

We can write~\eqref{eq:problem-CP} in standard form 
\begin{align*}
  \min_{x\in\R^{n}} \tfrac12\norm{Hx-f}_{2}^{2} + \lambda\norm{Dx}_{1} & = \min_{x}F(x) + G(Kx)\\
  \text{with}\quad F(x) = \tfrac12\norm{Hx-f}_{2}^{2},\quad K& =D,\quad G(z) = \lambda\norm{z}_{1}.
\end{align*}
We put this in correspondence with~\eqref{eq:CP-inclusion} by setting $A_{1}x = H^{T}(Hx-f)$, $K = D$ and $A_{2}x =\lambda\Sign(x) = \lambda\partial\norm{\cdot}_{1}(x)$ where $\Sign(x)$ is the componentwise multivalued sign function.

The inverse of $A_{2}$ is the multivalued map with the components
\begin{align*}
(A_{2}^{-1}x)_{i} = \left\{ 
  \begin{array}{l@{\quad:\quad}l}
    \emptyset & \abs{x_{i}}>\lambda\\
    ]-\infty,0] & x_{i}=-\lambda\\
    \left\{ 0 \right\} & \abs{x_{i}}<\lambda\\
    {[0,\infty[} & x_{i}=\lambda
\end{array}\right.
\end{align*}
and its resolvent (i.e. the proximal map of $G^{*}(y) = I_{\abs{\cdot}_{\infty}\leq \lambda}(y)$) is the clipping (and independent of $\theta>0$)
\begin{align*}
J_{\theta A_{2}^{-1}}(x) = \min(\max(x,-\lambda),\lambda) =: \operatorname{clip}_{[-\lambda,\lambda]}(x)
\end{align*}
(all operations applied componentwise).
The resolvent of $A_{1}$ (i.e. the proximal map of $F$) is
\begin{align*}
J_{\tau A_{1}}(x) =  \prox_{\tau F}(x) = (I + \tau H^{T}H)^{-1}(x + \tau H^{T}f)
\end{align*}
and hence, involves solving an $n\times n$ linear system.
The full iteration is 
\begin{align}\label{eq:implicitCP-iteration}
  \begin{split}
    x^{k+1} & = (I+\tau H^{T}H)^{-1}(x^{k} - \tau (D^{T}y^{k} - H^{T}f))\\
    y^{k+1} & = \operatorname{clip}_{[-\lambda,\lambda]}(y^{k} + \theta D(2x^{k+1}-x^{k})) 
  \end{split}
\end{align}
The method needs $\tau\theta \leq \norm{D}^{-2}$ and since we can estimate $\norm{D}\leq 2$ we take $\tau = \tfrac1{2\kappa}$, $\theta = \tfrac{\kappa}{2}$, with a scale factor $\kappa>0$.

\subsubsection{HPE Chambolle-Pock}
\label{sec:HPE-CP}

To avoid the exact solution of the linear system in the Chambolle-Pock method, we can apply the HPE framework from Section~\ref{sec:CP}, i.e. Algorithm~\ref{alg:HPE_CP}. Instead of evaluating the resolvent $J_{\tau A_{1}}$ exactly
we apply the method of conjugate gradients to improve the pair $(\tilde{x}^{k+1},a^{k+1})$. Note that for a given candidate $\tilde{x}^{k+1}$ there is a unique $a^{k+1} = H^{T}(H\tilde{x}^{k+1}-f)\in A_{1}\tilde{x}^{k+1}$.

For the HPE CP method we also need $\tau\theta \leq \norm{D}^{-2}\leq \tfrac14$ and we take again $\tau = \tfrac1{2\kappa}$, $\theta = \tfrac{\kappa}{2}$, with a scale factor $\kappa>0$.

\subsubsection{Condat-Vu}
\label{sec:condat-vu}

Another way to avoid the solution of the linear system is the Condat-Vu method. This method can be applied to problems of the form 
\begin{align*}
\min_{x\in \R^n} F(x) + \Phi(x) + G(Kx)
\end{align*}
where $H$ is an $L$-smooth convex function, and $F$ and $G$ are convex with simple proximal maps. We rewrite~\eqref{eq:problem-CP} in this form with 
\begin{align*}
F(x) = 0,\quad \Phi(x) = \tfrac12\norm{Hx-f}_{2}^{2},\quad G(z) = \lambda\norm{z}_{1},\quad K=D.
\end{align*}
The function $\Phi$ is indeed $L$-smooth with $L=\norm{H}^{2}$, and we can apply the Condat-Vu method~\cite{condat2013primal,vu2013splitting} which reads in this case as 
\begin{align}\label{eq:CV-iteration}
  \begin{split}
    x^{k+1} & = x^{k} - \tau (H^{T}(Hx^{k}-f) - D^{T}y^{k})\\
    y^{k+1} & = \prox_{I_{\norm{\cdot}_{\infty}\leq\lambda}}(y^{k} + \theta D(2x^{k+1}-x^{k})) 
  \end{split}
\end{align}
and converges as soon as the stepsizes $\tau,\sigma$ fulfill 
\begin{align*}
0<\tau<\tfrac2{\norm{H}^{2}},\quad 0<\theta < \left( \tfrac1\tau - \tfrac{\norm{H}^{2}}2 \right)\tfrac{1}{\norm{D}^{2}},
\end{align*}
cf.~\cite{condat2013primal,vu2013splitting}.

\subsubsection{Chambolle-Pock with explicit steps}
\label{sec:standard-CP}

Yet another way to avoid the solution of the linear system, we can also apply the plain Chambolle-Pock method after dualizing over both linear operators $H$ and $D$, i.e. we rewrite~\eqref{eq:problem-CP} as
\begin{align*}
  \min_{u\in\R^{n}} \tfrac12\norm{Hx-f}_{2}^{2} + \lambda\norm{Dx}_{1} & = \min_{x} \tilde{F}(x) + \tilde{G}(\tilde{K}x)\\
  \text{with}\quad\tilde{F}(x) = 0,\quad \tilde{K}x &=
  \begin{bmatrix}
    Hx\\Dx
  \end{bmatrix},\quad \tilde{G}(a,b) = \tfrac12\norm{a-f}_{2}^{2} + \lambda\norm{b}_{1}.
\end{align*}
This leads to the method
\begin{align}\label{eq:CP-iteration}
  \begin{split}
    x^{k+1} & = x^{k} - \tau(H^{T}u + D^{T}v)\\
    u^{k+1} & = \tfrac{1}{1+\theta}(u^{k} + \theta(H(2x^{k+1}-x^{k} -f)\\
    v^{k+1} & = \prox_{I_{\norm{\cdot}_{\infty}\leq \lambda}}(v^{k} + \theta D(2x^{k+1}-x^{k}))
  \end{split}
\end{align}
and convergence is guaranteed if we have $\theta\tau\leq \norm{K}^{2}$ (and we estimate $\norm{K}^{2}\leq\norm{H}^{2} + \norm{D}^{2}$). Here we take $\tau = \tfrac{1}{\kappa\norm{K}}$, $\theta = \tfrac{\kappa}{\norm{K}}$, for some $\kappa>0$.

\subsubsection{Experimental results}
\label{sec:CP-results}

In a first experiment we used $m=n=2000$ and generated an ill-conditioned matrix $H = U\Sigma V^{T}$ with two random orthonormal matrices $U,V$ and a diagonal matrix $\Sigma$ with diagonal entries $\sigma_{i} = \tfrac12 + \tfrac12\cos\left(\pi\tfrac{i-1}{m-1}\right)$, $i=1\dots,m$. In this way the singular values of $H$ decay from $1$ to $0$ along a cosine curve, so there is no big cluster of singular values. The resulting condition number was about $4.7\cdot10^{8}$. As matrix $D$ we take the $n-1\times n$ matrix that contains the first finite differences. As $f$ we take $f= Hx^{\dag} + \eta$ where $x^{\dag}$ is a piecewise constant and sparse signal, while $\eta$ is Gaussian noise.

We compare the following algorithms:
\begin{itemize}
\item implicit Chambolle-Pock (implicit CP) from Section~\ref{sec:implicit-CP},
\item inexact Chambolle-Pock with the HPE framework (HPE CP) from Algorithm~\ref{alg:HPE_CP} as described in Section~\ref{sec:HPE-CP},
\item the Condat-Vu method (CV) from Section~\ref{sec:condat-vu}, and
\item explicit Chambolle-Pock (explicit CP) from Section~\ref{sec:standard-CP}.
\end{itemize}

For the implicit Chambolle-Pock we also used the CG method to solve the linear system. However, we used a fixed relative tolerance of $10^{-8}$ for the residual to terminate the method (as it is usual for CG). In all cases where the CG method has been used it has been initialized with the current iterate $x$ as warmstart.

We made two runs with the following setups:
\begin{itemize}
\item First run: A large regularization of $\lambda=20$ and for HPE CP we demanded high accuracy by setting $\sigma=0.01$. The scaling of the stepsizes for HPE CP and implicit and explicit CP were tuned by hand to give best possible performance (we used $\kappa = 0.5$ for all methods).
\item Second run: A smaller regularization of $\lambda=1$ and a loser requirement for the accuracy of HPE CP by setting $\sigma=0.95$. The scaling of the stepsizes for HPE CP and implicit and explicit CP were tuned by hand to give best possible performance (we used $\kappa = 0.1$ for all methods).
\end{itemize}

We report the distance to the minimal objective value (computed with a larger number of iterations) in log-scale over iterations in Figures~\ref{fig:experiment1-CP-1} and~\ref{fig:experiment1-CP-2} and since each iteration of the HPE Chambolle-Pock method need a number of inner iterations which need one application of $H$ and $H^{T}$ each, we also report the distance to optimality over the number of applications of $H$ and $H^{T}$ for a fair comparison of computation effort. This also reflects the performances in terms of computational time.

We observe that the objective value of HPE CP and implicit CP when viewed over outer iterations are indistinguishable from each other, although HPE CP uses less CG iterations per outer iteration.
This can be seen in the plots on the right where we counted the number of applications with $H$ and $H^{T}$ is all methods (which are the dominant operations in all algorithms).
For the second run with the looser requirements, smaller $\lambda$ and different scaling (a larger primal stepsize $\tau$) HPE always needed just one iteration of CG to reach the necessary accuracy while the implicit Chambolle-Pock needed between $6$ and $11$ CG iterations. In general, we observed that a larger $\tau$ makes the approximate solution of the resolved more difficult and resulted in slightly more CG iterations.
We were not able to tune the explicit CP as well as the Condat-Vu method to achieve comparable performance in both runs.

\begin{figure}[htb]
  \centering
  \includegraphics[height=5cm]{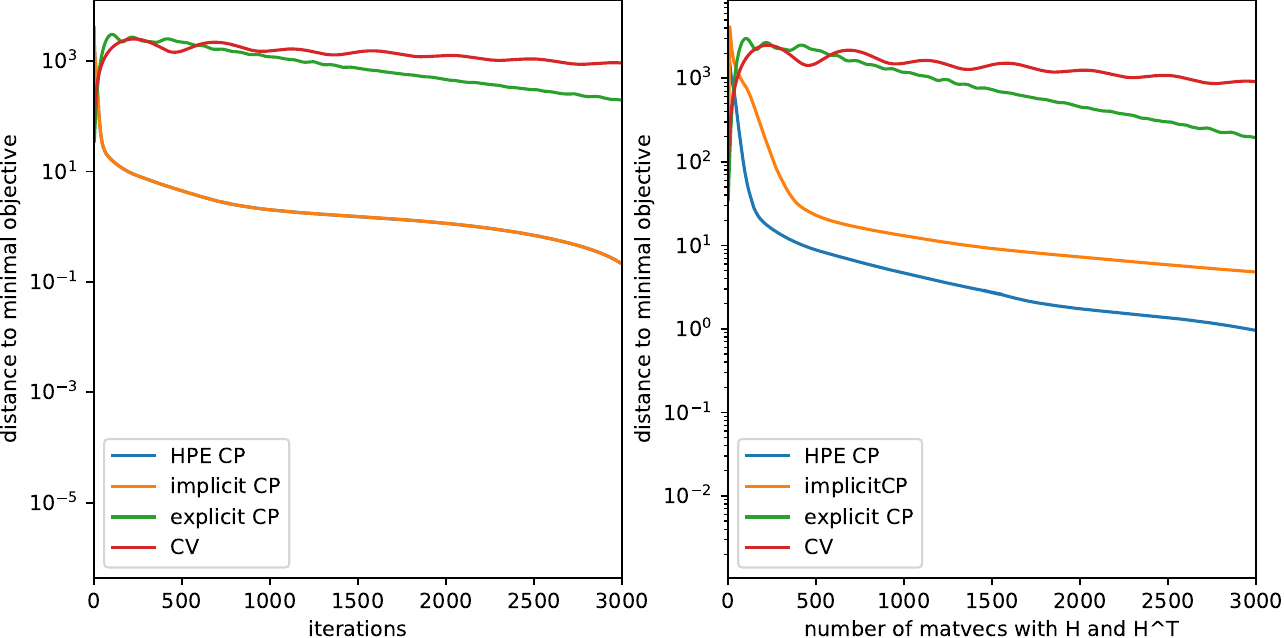}
  \caption{Objective value over iterations and objective value over time for the first run of experiment 1 described in Section~\ref{sec:cp-experiment}.}
  \label{fig:experiment1-CP-1}
\end{figure}

\begin{figure}[htb]
  \centering
  \includegraphics[height=5cm]{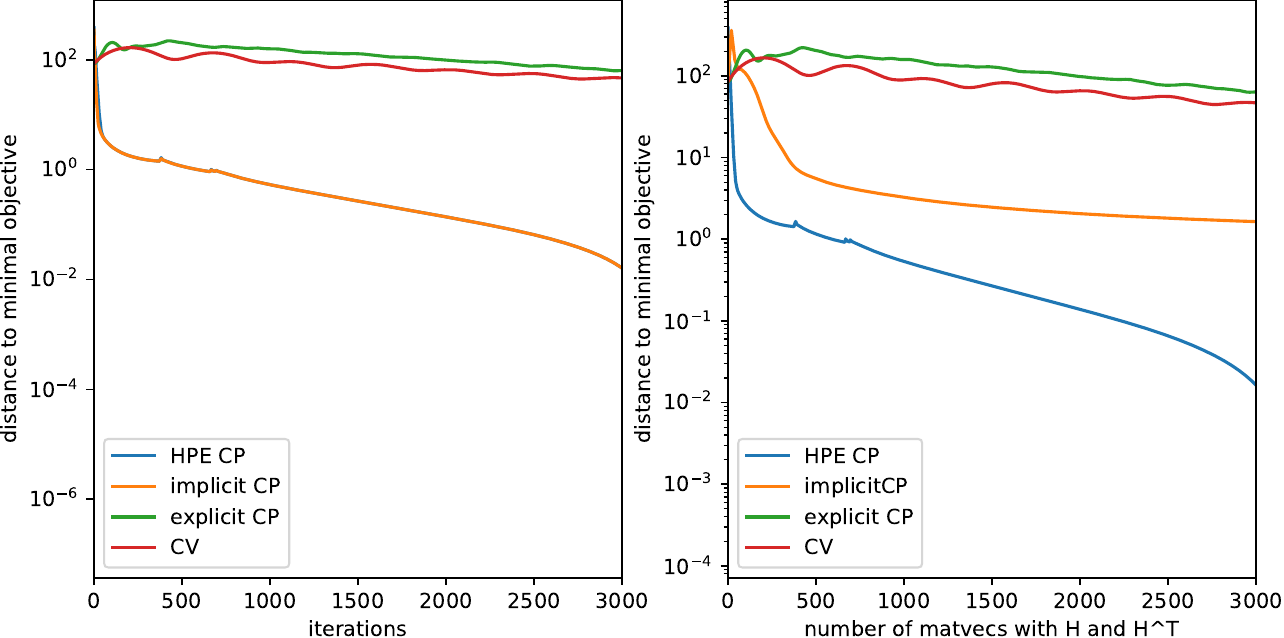}
  \caption{Objective value over iterations and objective value over time for the second run of experiment 1 described in Section~\ref{sec:cp-experiment}.}
  \label{fig:experiment1-CP-2}
\end{figure}

In a second experiment we considered $m=1000$ and $n=4000$ and a similar setup, but used a more ill-conditioned matrix where the singular values are $\sigma_{i} = (1 - \tfrac{i-1}{m-1})^{5}$ (resulting in a condition number of about $2.12\cdot10^{15}$). Algorithmic parameters have been chosen as follows: A small regularization parameter$\lambda=0.1$, and a large tolerance $\sigma=0.99$. The scaling was again set to optimize performance for all methods (resulting in $\kappa=0.5$ for all methods). The results are shown in Figure~\ref{fig:experiment2-CP}.

\begin{figure}[htb]
  \centering
  \includegraphics[height=5cm]{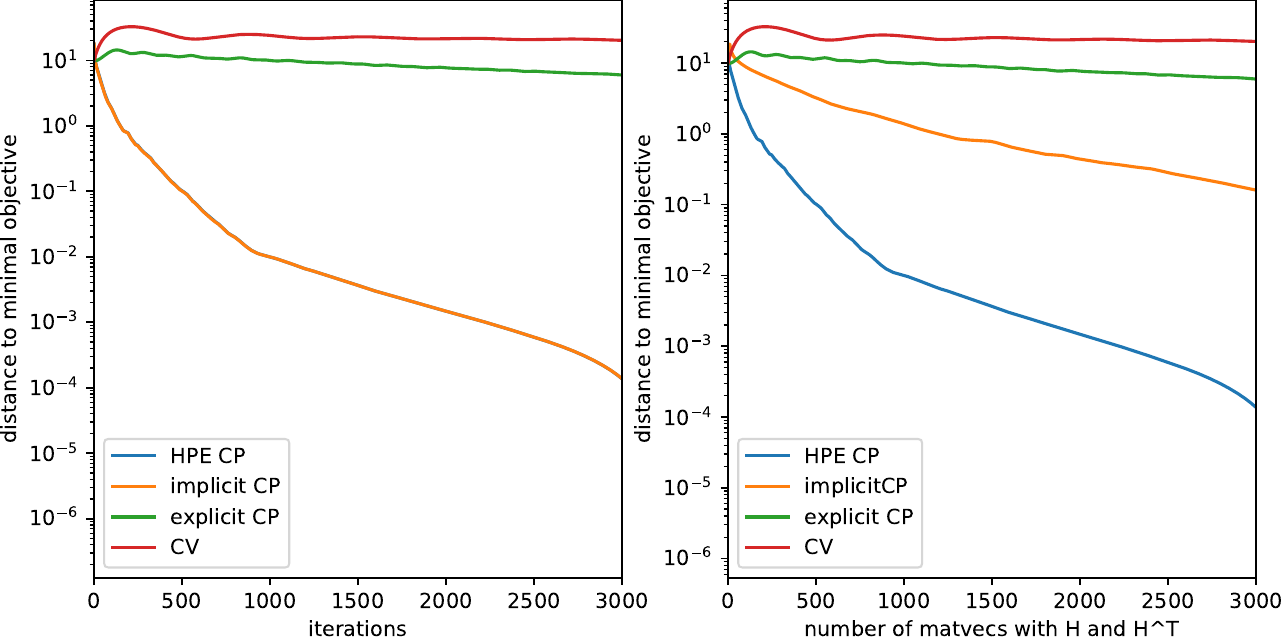}
  \caption{Objective value over iterations and objective value over time for experiment 2 described in Section~\ref{sec:cp-experiment}.}
  \label{fig:experiment2-CP}
\end{figure}

We see that the improvement of HPE over the implicit CP is even more significant. In this case HPE CP terminates the CG iteration always after the first step while the implicit CP which uses a fixed tolerance needs between about 13 iterations in the beginning and 4 iteration in later stages.

\subsection{Davis-Yin}
\label{sec:dy-experiment}

We present here a numerical example to show the applicability of Algorithm \ref{alg:HPE_DY} in practice. We consider $H\in \R^{m\times n}$, $f\in \R^m$, $\lambda_1,\lambda_2 >0$ and the problem

\begin{equation}\label{eq:HPE_DY_experiment}
 \min_{x \in \R^n} \frac{1}{2}\|H x - f\|^2 + \lambda_1\|x\|_1 + \lambda_2 L_\delta(Dx),
\end{equation}
where $D\in\R^{k\times n}$ is some dictionary matrix (we use again a finite difference matrix just as an example) and $L_\delta$ is the Huber loss defined by $L_\delta(y) = \sum_{i=1}^k h_\delta(y_i)$ with
\[h_\delta(y_i)=\begin{cases}\frac{1}{2}|y_i|^2 & |y_i| \leq \delta\\
\delta (|y_i|-\frac{1}{2}\delta) & |y_i|>\delta\end{cases}\]
for some $\delta >0$. The Huber loss is usually adopted as a smooth approximation of the $1$-norm and in our case produce the term $L_\delta(Dx)$ which is know as Huber-TV regularization~\cite{werlberger2009anisotropic,hintermuller2013nonconvex}. Notice that the gradient of $L_\delta$ is $\nabla L_\delta (y) = (\partial_{y_1} h_\delta (y_1),\dots, \partial_{y_k} h_\delta (y_k))$, where \begin{equation*}\label{eq:gradient_Huber}
\partial_{y_i} h_\delta(y_i)=\begin{cases} y_i & |y_i| \leq \delta\\
\delta \, \text{sign}(y_i) & |y_i|>\delta,\end{cases}\end{equation*}
and thus $\nabla L_\delta$ is $1$-Lipschitz and $L_\delta \circ D$ is $\|D\|^2$-smooth. The parameters $\lambda_1$ and $\lambda_2$ serve as regularization parameters.

There are numerous approaches to solve this problem with splitting methods, and again, the term $\tfrac12\norm{Hx-f}_2^2$ can be treated in different ways.
\begin{itemize}
\item Doing implicit steps, i.e. evaluating the respective resolvent of $\tfrac12\norm{Hf-f}_{2}^{2}$ exactly. This amount to solving a linear system with the operator $I + \tau H^{T}H$. As in the previous section, we will do this applying the method of conjugate gradients (CG) for the solution of the linear system up to a certain fixed tolerance. This is done in what we call \emph{implicit Davis-Yin} and explain the details below.
\item Doing approximate implicit steps by using the method of conjugate gradients (CG) for the solution of the linear systems within the HPE framework as described in Algorithm~\ref{alg:HPE_DY}.
\item Doing forward steps: Since the term is $L$-smooth, one can do forward steps of the form $H^{T}(Hx-f)$. This is possible making use for example of the forward-backward (FB) algorithm (or proximal-gradient) \cite{Tseng_FB,Rockafellar_FB,Combettes_FB,Attouch_FB}.
\end{itemize}

\subsubsection{Davis-Yin with implicit steps}\label{sec:implicit-DY}
 We put the optimization problem \eqref{eq:HPE_DY_experiment} in correspondence with \eqref{eq:3op_problem} by using Remark \ref{rem:DY-functions} and setting $A_1 x=H^T(H x-b)$, $A_2 x=\lambda_1 \partial \|x\|_1$ and $B x= \lambda_2 \nabla (L_\delta \circ D ) (x) = \lambda_2 D^T \nabla L_\delta ( D x)$. We apply the standard Davis-Yin method \cite{davis2017} with this choice of operators, resulting in the scheme
 \begin{equation}\label{eq:implicitDY-iteration}
  \begin{aligned}
    x_1^{k+1} & = (I+\gamma H^{T}H)^{-1}( w^k- \gamma H^{T}f))\\
    x_2^{k+1} & = \operatorname{soft}_{[-\gamma \lambda_1,\gamma \lambda_1]} (2x_1^{k+1}-w^k-\gamma \lambda_2 D^T \nabla L_\delta ( D x_1^{k+1}))\\
    w^{k+1} & = w^k + \frac{x_2^{k+1}-x_1^{k+1}}{1+\alpha}
  \end{aligned}
\end{equation}
where $\gamma \in (0,2/\beta)$ is the stepsize (for some constant $\beta$), $\alpha = \frac{\gamma \beta}{4-\gamma \beta} $ and the soft tresholding operator \[\operatorname{soft}_{[-\eta,\eta]}(x) = \text{sign}(x)\max(|x|-\eta,0)\] is computed componetwise. In order to guarantee convergence of the Davis-Yin algorithm, the constant $\beta$ has to be greater than the cocoercivity constant of $B$, which coincides with its Lipschitz constant, since $B$ is a gradient of a convex function (see~\cite[Theorem 18.15]{BCombettes}). In the experiments we estimated from above this constant, using the fact that $\|D\|\leq 2$ and setting $\beta = 4 \lambda_2 \geq \lambda_2 \|D\|^2$. Then, we selected the stepsize $\gamma = \frac{1}{\beta}$.
 
\subsubsection{HPE Davis-Yin}\label{sec:HPE-DY}
To avoid the exact solution of the linear system in the Davis-Yin method of the previous section, we can apply the HPE framework from Section~\ref{sec:DY}, i.e. Algorithm~\ref{alg:HPE_DY}. Instead of evaluating the resolvent $J_{\tau A_{1}}$ exactly
we apply the method of conjugate gradients to improve the pair $(\tilde{x_1}^{k+1},a_1^{k+1})$. Note that for a given candidate $\tilde{x}_1^{k+1}$ there is a unique $a_1^{k+1} = H^{T}(H\tilde{x}_1^{k+1}-f)\in A_{1}\tilde{x}_1^{k+1}$.

For the HPE DY method we need $\gamma \in \left(0,\frac{2}{\beta}\right)$ and we take again $\gamma = \frac{1}{\beta}$, with $\beta$ estimating the value of the cocoercivity constant of $B$ from above as seen in Section~\ref{sec:implicit-DY}.

\subsubsection{Forward-Backward}\label{sec:FB}
Introducing the operators $A,\tilde B$ defined by $A x=\lambda_1 \partial \|x\|_1$ and $\widetilde Bx = H^T(H x-f) + \lambda_2 D^T \nabla L_\delta ( D x)$, respectively, the problem \eqref{eq:HPE_DY_experiment} can be written as
\[\text{find } x \in \R^n \quad \text{such that } \quad  0 \in Ax+ \tilde Bx.\]
To this problem, one can apply the classical forward-backward algorithm (or proximal-gradient) \cite{Tseng_FB,Rockafellar_FB,Combettes_FB,Attouch_FB}. The resulting scheme reads as
 \begin{align}\label{eq:implicitFB-iterations}
  \begin{split}
    x^{k+1} & =  \operatorname{soft}_{[-\gamma \lambda_1,\gamma \lambda_1]}(x^k- \gamma H^T(H x^k-f - \gamma \lambda_2 D^T \nabla L_\delta ( D x^k))
  \end{split}
\end{align}

Also here, the stepsize $\gamma$ has to be lower than a certain quantity. In particular, to guarantee convergence, we estimated from above the cocoercivity constant of the operator $\tilde B$, by setting $\tilde \beta = \|H\|^2+4 \lambda_2 \geq \|H\|^2+\lambda_2\|D\|^2 $ (using again the fact that $\|D\|\leq 2$), and then selected $\gamma = 1/\tilde \beta$.

\subsubsection{Experimental results}
\label{sec:DY-results}
For the experiments, we used $m=n=2000$ and generated an ill-conditioned matrix $H = U\Sigma V^{T}$ with two random orthonormal matrices $U,V$ and a diagonal matrix $\Sigma$ with diagonal entries $\sigma_{i} = \tfrac12 + \tfrac12\cos\left(\pi\tfrac{i-1}{m-1}\right)$, $i=1\dots,m$. In this way the singular values of $H$ decay from $1$ to $0$ along a cosine curve, so there is no big cluster of singular values. The resulting condition number was about $4.7\cdot10^{8}$. As matrix $D$ we take the $(n-1)\times n$ matrix that contains the first finite differences. As $f$ we take $f= Hx^{\dag} + \eta$ where $x^{\dag}$ is a piecewise constant signal and $\eta$ is Gaussian noise.
We performed three runs of an experiment comparing the performance of the following algorithms:
\begin{itemize}
\item implicit Davis-Yin (implicit DY) from Section \ref{sec:implicit-DY},
\item inexact Davis-Yin with the HPE framework (HPE DY) from Algorithm~\ref{alg:HPE_CP} as described in Section \ref{sec:HPE-DY},
\item the forward-backward method (FB) from Section \ref{sec:FB}.
\end{itemize}
 
 For the implicit Davis-Yin we also used the CG method to solve the linear system. However, we used a fixed relative tolerance of $10^{-8}$ for the residual to terminate the method (as it is usual for CG). In all cases where the CG method has been used it has been initialized with the current iterate $x$ as warmstart.\\

We made three runs with the following setups:
\begin{itemize}
\item First run: Regularization parameters $\lambda_1=0.001$, $\lambda_2=0.1$ and for HPE DY we demanded low accuracy by setting $\sigma=0.99$.
\item Second run: Smaller regularization parameters $\lambda_1=0.0001$, $\lambda_2=0.1$ and again accuracy $\sigma=0.99$.
\item Third run: Smaller regularization parameters $\lambda_1=0.0001$, $\lambda_2=0.01$ and again accuracy $\sigma=0.99$.\\
\end{itemize}

We report the distance to the minimal objective value (computed with a larger number of iterations) in log-scale over iterations in Figures~\ref{fig:experiment11-DY},~\ref{fig:experiment12-DY} and~\ref{fig:experiment13-DY} and since each iteration of the HPE Davis-Yin method need a number of inner iterations which need one application of $H$ and $H^{T}$ each, we also report the distance to optimality over the number of applications of $H$ and $H^{T}$ for a fair comparison of computation effort. This also reflects the performances in terms of computational times.

We observe that the objective value of HPE DY and implicit DY when viewed over outer iterations are similar to each other, although HPE DY uses less CG iterations per outer iteration. This can be seen in the plots on the right where we counted the number of applications with $H$ and $H^{T}$ in all methods (which are the dominant operations in all algorithms).
In all the runs, HPE always needed just one or two iteration of CG to reach the necessary accuracy while the implicit Davis-Yin needed between $6$ and $11$ CG iterations. We were not able to tune the FB method to achieve comparable performance in the runs.

\begin{figure}[htb]
  \centering
  \includegraphics[height=5cm]{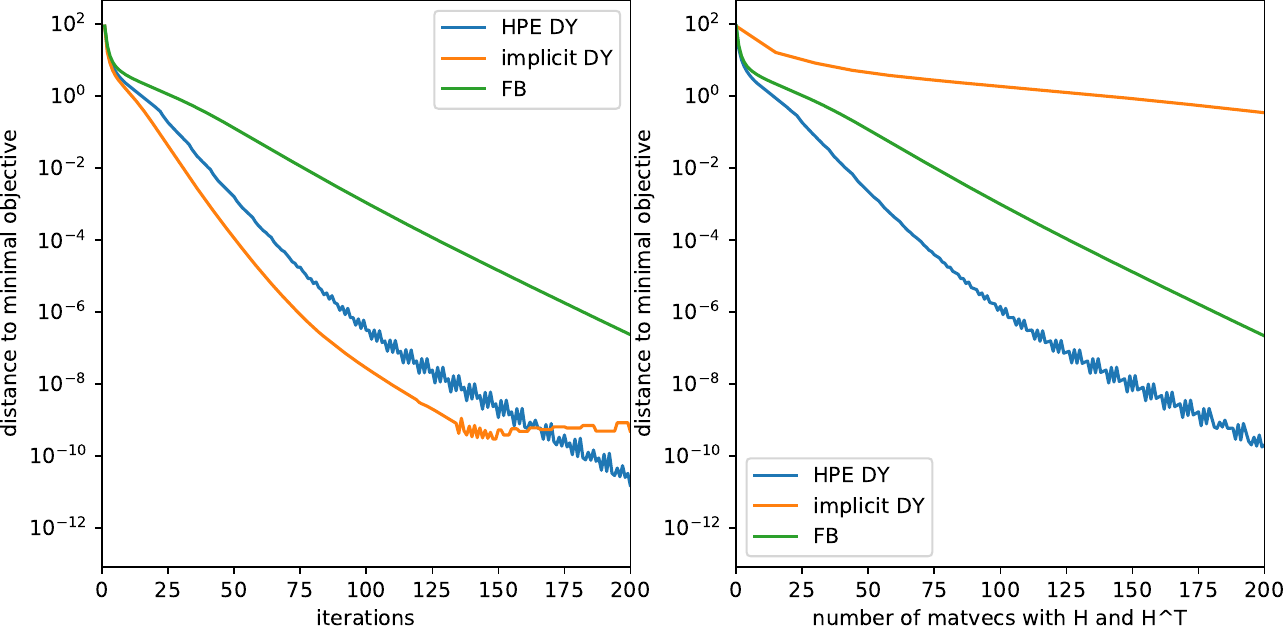}
  \caption{Objective value over iterations and objective value over time for the first run described in Section~\ref{sec:dy-experiment}.}
  \label{fig:experiment11-DY}
\end{figure}

\begin{figure}[htb]
  \centering
  \includegraphics[height=5cm]{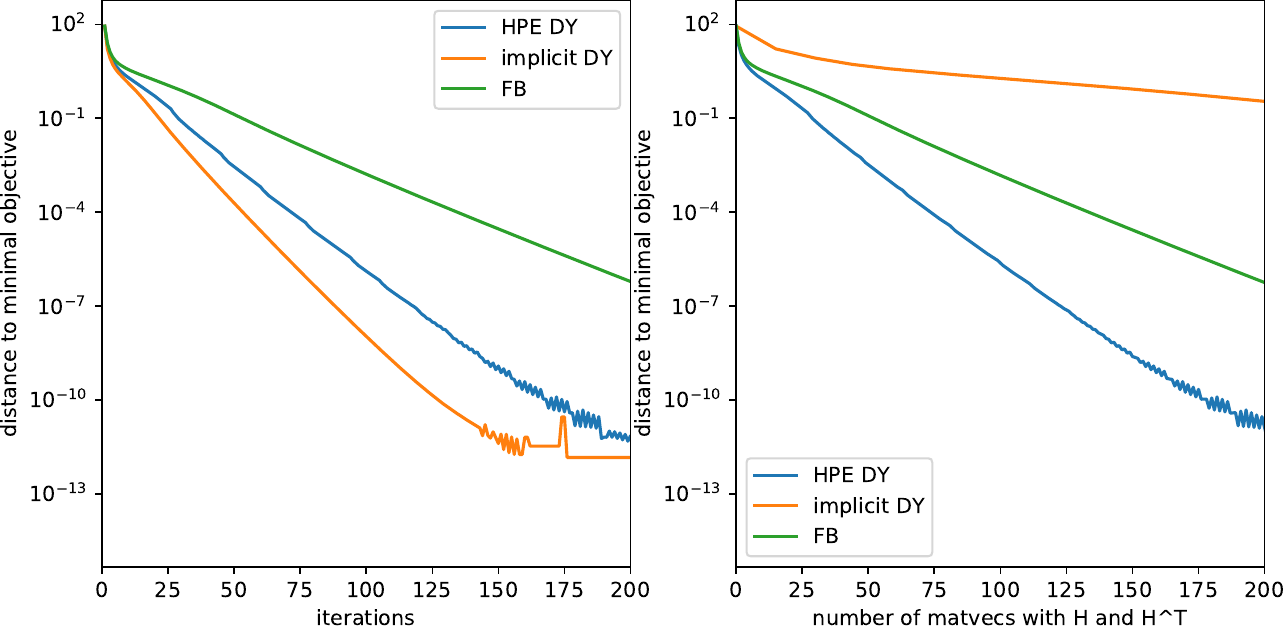}
  \caption{Objective value over iterations and objective value over time for the second run described in Section~\ref{sec:dy-experiment}.}
  \label{fig:experiment12-DY}
\end{figure}

\begin{figure}[htb]
  \centering
  \includegraphics[height=5cm]{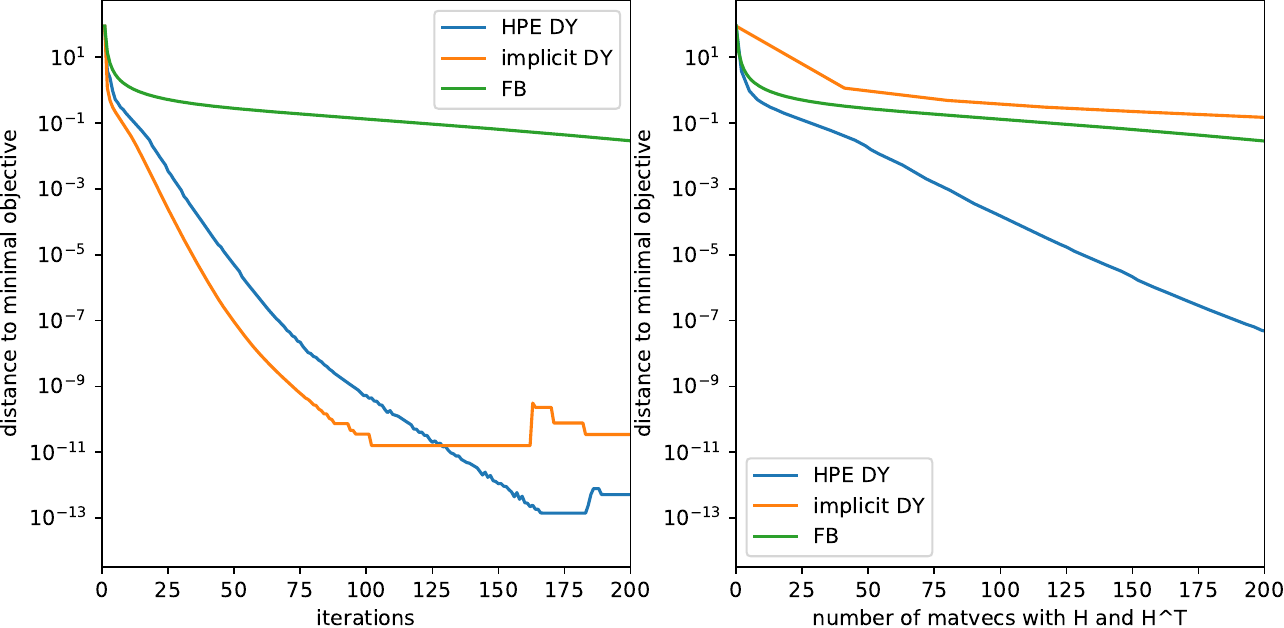}
  \caption{Objective value over iterations and objective value over time for the third run described in Section~\ref{sec:dy-experiment}.}
  \label{fig:experiment13-DY}
\end{figure}

\section*{Declarations}

\noindent \textbf{Data Availability Statement.} All data used in this paper is generated synthetically. The code is available at \url{https://github.com/EmanueleNaldi/Degenerate-HPE}.

 \bigskip

\noindent \textbf{Conflict of interest.} The authors declare that there are no conflicts of interest regarding the publication of this paper.

\bigskip

\noindent \textbf{Acknowledgments.} Part of this work was realized while M. M. A. was visiting the Institute of Analysis and Algebra, TU Braunschweig, Germany, under the financial support of CAPES-PRINT/UFSC Institutional Internationalization Program, which is gratefully acknowledged. The work of M. M. A. is partially supported by CNPq grant 308036/2021-2. D. A. L. acknowledges support by the Alexander von Humboldt Foundation for a visit at the Federal University of Santa Catarina (UFSC). This work has received funding from the European Union’s Framework Programme for Research and Innovation Horizon 2020 (2014--2020) under the Marie Skłodowska--Curie Grant Agreement No. 861137. E. N. acknowledges the support of the US Air Force Office of Scientific Research (FA8655-22-1-7034). The research by E. N. has been supported by the MUR Excellence Department Project awarded to Dipartimento di Matematica, Università degli Studi di Genova, CUP D33C23001110001. E. N. is a member of the ``Gruppo Nazionale per l’Analisi Matematica, la Probabilità e le loro Applicazioni" (GNAMPA) of the Istituto Nazionale di Alta Matematica (INdAM).


\bibliography{references}


\begin{thebibliography}{48}
\ifx \bisbn   \undefined \def \bisbn  #1{ISBN #1}\fi
\ifx \binits  \undefined \def \binits#1{#1}\fi
\ifx \bauthor  \undefined \def \bauthor#1{#1}\fi
\ifx \batitle  \undefined \def \batitle#1{#1}\fi
\ifx \bjtitle  \undefined \def \bjtitle#1{#1}\fi
\ifx \bvolume  \undefined \def \bvolume#1{\textbf{#1}}\fi
\ifx \byear  \undefined \def \byear#1{#1}\fi
\ifx \bissue  \undefined \def \bissue#1{#1}\fi
\ifx \bfpage  \undefined \def \bfpage#1{#1}\fi
\ifx \blpage  \undefined \def \blpage #1{#1}\fi
\ifx \burl  \undefined \def \burl#1{\textsf{#1}}\fi
\ifx \doiurl  \undefined \def \doiurl#1{\url{https://doi.org/#1}}\fi
\ifx \betal  \undefined \def \betal{\textit{et al.}}\fi
\ifx \binstitute  \undefined \def \binstitute#1{#1}\fi
\ifx \binstitutionaled  \undefined \def \binstitutionaled#1{#1}\fi
\ifx \bctitle  \undefined \def \bctitle#1{#1}\fi
\ifx \beditor  \undefined \def \beditor#1{#1}\fi
\ifx \bpublisher  \undefined \def \bpublisher#1{#1}\fi
\ifx \bbtitle  \undefined \def \bbtitle#1{#1}\fi
\ifx \bedition  \undefined \def \bedition#1{#1}\fi
\ifx \bseriesno  \undefined \def \bseriesno#1{#1}\fi
\ifx \blocation  \undefined \def \blocation#1{#1}\fi
\ifx \bsertitle  \undefined \def \bsertitle#1{#1}\fi
\ifx \bsnm \undefined \def \bsnm#1{#1}\fi
\ifx \bsuffix \undefined \def \bsuffix#1{#1}\fi
\ifx \bparticle \undefined \def \bparticle#1{#1}\fi
\ifx \barticle \undefined \def \barticle#1{#1}\fi
\bibcommenthead
\ifx \bconfdate \undefined \def \bconfdate #1{#1}\fi
\ifx \botherref \undefined \def \botherref #1{#1}\fi
\ifx \url \undefined \def \url#1{\textsf{#1}}\fi
\ifx \bchapter \undefined \def \bchapter#1{#1}\fi
\ifx \bbook \undefined \def \bbook#1{#1}\fi
\ifx \bcomment \undefined \def \bcomment#1{#1}\fi
\ifx \oauthor \undefined \def \oauthor#1{#1}\fi
\ifx \citeauthoryear \undefined \def \citeauthoryear#1{#1}\fi
\ifx \endbibitem  \undefined \def \endbibitem {}\fi
\ifx \bconflocation  \undefined \def \bconflocation#1{#1}\fi
\ifx \arxivurl  \undefined \def \arxivurl#1{\textsf{#1}}\fi
\csname PreBibitemsHook\endcsname

\bibitem[\protect\citeauthoryear{Bauschke and Combettes}{2017}]{BCombettes}
\begin{bbook}
\bauthor{\bsnm{Bauschke}, \binits{H.H.}},
\bauthor{\bsnm{Combettes}, \binits{P.L.}}:
\bbtitle{Convex Analysis and Monotone Operator Theory in {H}ilbert Spaces},
\bedition{2}nd edn.
\bsertitle{CMS Books in Mathematics},
p. \bfpage{619}.
\bpublisher{Springer},
\blocation{Cham}
(\byear{2017}).
\doiurl{10.1007/978-3-319-48311-5}
\end{bbook}
\endbibitem

\bibitem[\protect\citeauthoryear{Bredies et~al.}{2022}]{BCLN22}
\begin{barticle}
\bauthor{\bsnm{Bredies}, \binits{K.}},
\bauthor{\bsnm{Chenchene}, \binits{E.}},
\bauthor{\bsnm{Lorenz}, \binits{D.A.}},
\bauthor{\bsnm{Naldi}, \binits{E.}}:
\batitle{Degenerate preconditioned proximal point algorithms}.
\bjtitle{SIAM Journal on Optimization}
\bvolume{32}(\bissue{3}),
\bfpage{2376}--\blpage{2401}
(\byear{2022})
\end{barticle}
\endbibitem

\bibitem[\protect\citeauthoryear{Rockafellar}{1976}]{rockafellar1976}
\begin{barticle}
\bauthor{\bsnm{Rockafellar}, \binits{R.T.}}:
\batitle{Monotone operators and the proximal point algorithm}.
\bjtitle{SIAM Journal on Control and Optimization}
\bvolume{14}(\bissue{5}),
\bfpage{877}--\blpage{898}
(\byear{1976})
\end{barticle}
\endbibitem

\bibitem[\protect\citeauthoryear{Solodov and Svaiter}{1999}]{solodov1999}
\begin{barticle}
\bauthor{\bsnm{Solodov}, \binits{M.V.}},
\bauthor{\bsnm{Svaiter}, \binits{B.F.}}:
\batitle{A hybrid approximate extragradient--proximal point algorithm using the
  enlargement of a maximal monotone operator}.
\bjtitle{Set-Valued Analysis}
\bvolume{7}(\bissue{4}),
\bfpage{323}--\blpage{345}
(\byear{1999})
\end{barticle}
\endbibitem

\bibitem[\protect\citeauthoryear{Eckstein and Yao}{2018}]{eckstein2018}
\begin{barticle}
\bauthor{\bsnm{Eckstein}, \binits{J.}},
\bauthor{\bsnm{Yao}, \binits{W.}}:
\batitle{Relative-error approximate versions of {D}ouglas--{R}achford splitting
  and special cases of the admm}.
\bjtitle{Mathematical Programming}
\bvolume{170},
\bfpage{417}--\blpage{444}
(\byear{2018})
\end{barticle}
\endbibitem

\bibitem[\protect\citeauthoryear{Chambolle and Pock}{2011}]{chambolle2011}
\begin{barticle}
\bauthor{\bsnm{Chambolle}, \binits{A.}},
\bauthor{\bsnm{Pock}, \binits{T.}}:
\batitle{A first-order primal-dual algorithm for convex problems with
  applications to imaging}.
\bjtitle{Journal of Mathematical Imaging and Vision}
\bvolume{40},
\bfpage{120}--\blpage{145}
(\byear{2011})
\end{barticle}
\endbibitem

\bibitem[\protect\citeauthoryear{Davis and Yin}{2017}]{davis2017}
\begin{barticle}
\bauthor{\bsnm{Davis}, \binits{D.}},
\bauthor{\bsnm{Yin}, \binits{W.}}:
\batitle{A three-operator splitting scheme and its optimization applications}.
\bjtitle{Set-valued and variational analysis}
\bvolume{25},
\bfpage{829}--\blpage{858}
(\byear{2017})
\end{barticle}
\endbibitem

\bibitem[\protect\citeauthoryear{Solodov and Svaiter}{1999}]{solodov1999b}
\begin{barticle}
\bauthor{\bsnm{Solodov}, \binits{M.V.}},
\bauthor{\bsnm{Svaiter}, \binits{B.F.}}:
\batitle{A hybrid projection-proximal point algorithm}.
\bjtitle{J. Convex Anal.}
\bvolume{6}(\bissue{1}),
\bfpage{59}--\blpage{70}
(\byear{1999})
\end{barticle}
\endbibitem

\bibitem[\protect\citeauthoryear{Solodov and Svaiter}{2001}]{solodov2001}
\begin{barticle}
\bauthor{\bsnm{Solodov}, \binits{M.V.}},
\bauthor{\bsnm{Svaiter}, \binits{B.F.}}:
\batitle{A unified framework for some inexact proximal point algorithms}.
\bjtitle{Numer. Funct. Anal. Optim.}
\bvolume{22}(\bissue{7-8}),
\bfpage{1013}--\blpage{1035}
(\byear{2001})
\end{barticle}
\endbibitem

\bibitem[\protect\citeauthoryear{Alves et~al.}{2016}]{alves2016}
\begin{barticle}
\bauthor{\bsnm{Alves}, \binits{M.M.}},
\bauthor{\bsnm{Monteiro}, \binits{R.D.C.}},
\bauthor{\bsnm{Svaiter}, \binits{B.F.}}:
\batitle{Regularized {HPE}-type methods for solving monotone inclusions with
  improved pointwise iteration-complexity bounds}.
\bjtitle{SIAM J. Optim.}
\bvolume{26}(\bissue{4}),
\bfpage{2730}--\blpage{2743}
(\byear{2016})
\doiurl{10.1137/15M1038566}
\end{barticle}
\endbibitem

\bibitem[\protect\citeauthoryear{Monteiro and Svaiter}{2010}]{monteiro2010}
\begin{barticle}
\bauthor{\bsnm{Monteiro}, \binits{R.D.C.}},
\bauthor{\bsnm{Svaiter}, \binits{B.F.}}:
\batitle{On the complexity of the hybrid proximal extragradient method for the
  iterates and the ergodic mean}.
\bjtitle{SIAM J. Optim.}
\bvolume{20}(\bissue{6}),
\bfpage{2755}--\blpage{2787}
(\byear{2010})
\doiurl{10.1137/090753127}
\end{barticle}
\endbibitem

\bibitem[\protect\citeauthoryear{Monteiro and Svaiter}{2012}]{monteiro2012}
\begin{barticle}
\bauthor{\bsnm{Monteiro}, \binits{R.D.C.}},
\bauthor{\bsnm{Svaiter}, \binits{B.F.}}:
\batitle{Iteration-{C}omplexity of a {N}ewton {P}roximal {E}xtragradient
  {M}ethod for {M}onotone {V}ariational {I}nequalities and {I}nclusion
  {P}roblems}.
\bjtitle{SIAM J. Optim.}
\bvolume{22}(\bissue{3}),
\bfpage{914}--\blpage{935}
(\byear{2012})
\doiurl{10.1137/11083085X}
\end{barticle}
\endbibitem

\bibitem[\protect\citeauthoryear{Monteiro and Svaiter}{2013}]{monteiro2013}
\begin{barticle}
\bauthor{\bsnm{Monteiro}, \binits{R.D.C.}},
\bauthor{\bsnm{Svaiter}, \binits{B.F.}}:
\batitle{An accelerated hybrid proximal extragradient method for convex
  optimization and its implications to second-order methods}.
\bjtitle{SIAM J. Optim.}
\bvolume{23}(\bissue{2}),
\bfpage{1092}--\blpage{1125}
(\byear{2013})
\doiurl{10.1137/110833786}
\end{barticle}
\endbibitem

\bibitem[\protect\citeauthoryear{Korpelevi{\v{c}}}{1976}]{korpelevich1976}
\begin{barticle}
\bauthor{\bsnm{Korpelevi{\v{c}}}, \binits{G.M.}}:
\batitle{An extragradient method for finding saddle points and for other
  problems}.
\bjtitle{\`Ekonom. i Mat. Metody}
\bvolume{12}(\bissue{4}),
\bfpage{747}--\blpage{756}
(\byear{1976})
\end{barticle}
\endbibitem

\bibitem[\protect\citeauthoryear{Nemirovski}{2005}]{nemirovski2005}
\begin{barticle}
\bauthor{\bsnm{Nemirovski}, \binits{A.}}:
\batitle{Prox-method with rate of convergence {$O(1/t)$} for variational
  inequalities with {Lipschitz} continuous monotone operators and smooth
  convex-concave saddle point problems}.
\bjtitle{SIAM Journal on Optimization}
\bvolume{15},
\bfpage{229}--\blpage{251}
(\byear{2005})
\end{barticle}
\endbibitem

\bibitem[\protect\citeauthoryear{Combettes}{2004}]{combettes2004solving}
\begin{barticle}
\bauthor{\bsnm{Combettes}, \binits{P.L.}}:
\batitle{Solving monotone inclusions via compositions of nonexpansive averaged
  operators}.
\bjtitle{Optimization}
\bvolume{53}(\bissue{5-6}),
\bfpage{475}--\blpage{504}
(\byear{2004})
\end{barticle}
\endbibitem

\bibitem[\protect\citeauthoryear{Svaiter}{2011}]{svaiter2011weak}
\begin{barticle}
\bauthor{\bsnm{Svaiter}, \binits{B.F.}}:
\batitle{On weak convergence of the {D}ouglas--{R}achford method}.
\bjtitle{SIAM Journal on Control and Optimization}
\bvolume{49}(\bissue{1}),
\bfpage{280}--\blpage{287}
(\byear{2011})
\end{barticle}
\endbibitem

\bibitem[\protect\citeauthoryear{Alves et~al.}{2020}]{alves2020}
\begin{barticle}
\bauthor{\bsnm{Alves}, \binits{M.M.}},
\bauthor{\bsnm{Eckstein}, \binits{J.}},
\bauthor{\bsnm{Geremia}, \binits{M.}},
\bauthor{\bsnm{Melo}, \binits{J.G.}}:
\batitle{Relative-error inertial-relaxed inexact versions of
  {D}ouglas-{R}achford and {ADMM} splitting algorithms}.
\bjtitle{Comput. Optim. Appl.}
\bvolume{75}(\bissue{2}),
\bfpage{389}--\blpage{422}
(\byear{2020})
\end{barticle}
\endbibitem

\bibitem[\protect\citeauthoryear{Eckstein and Bertsekas}{1992}]{eckstein1992}
\begin{barticle}
\bauthor{\bsnm{Eckstein}, \binits{J.}},
\bauthor{\bsnm{Bertsekas}, \binits{D.P.}}:
\batitle{On the {D}ouglas—{R}achford splitting method and the proximal point
  algorithm for maximal monotone operators}.
\bjtitle{Mathematical programming}
\bvolume{55},
\bfpage{293}--\blpage{318}
(\byear{1992})
\end{barticle}
\endbibitem

\bibitem[\protect\citeauthoryear{Bredies and
  Sun}{2015}]{bredies2015preconditioned}
\begin{barticle}
\bauthor{\bsnm{Bredies}, \binits{K.}},
\bauthor{\bsnm{Sun}, \binits{H.}}:
\batitle{Preconditioned {D}ouglas--{R}achford splitting methods for
  convex-concave saddle-point problems}.
\bjtitle{SIAM Journal on Numerical Analysis}
\bvolume{53}(\bissue{1}),
\bfpage{421}--\blpage{444}
(\byear{2015})
\end{barticle}
\endbibitem

\bibitem[\protect\citeauthoryear{Bredies et~al.}{2022}]{bredies2022graph}
\begin{botherref}
\oauthor{\bsnm{Bredies}, \binits{K.}},
\oauthor{\bsnm{Chenchene}, \binits{E.}},
\oauthor{\bsnm{Naldi}, \binits{E.}}:
Graph and distributed extensions of the {D}ouglas-{R}achford method.
arXiv preprint arXiv:2211.04782
(2022)
\end{botherref}
\endbibitem

\bibitem[\protect\citeauthoryear{Ryu}{2020}]{Ryu}
\begin{barticle}
\bauthor{\bsnm{Ryu}, \binits{E.K.}}:
\batitle{Uniqueness of {DRS} as the 2 operator resolvent-splitting and
  impossibility of 3 operator resolvent-splitting}.
\bjtitle{Mathematical Programming}
\bvolume{182}(\bissue{1}),
\bfpage{233}--\blpage{273}
(\byear{2020})
\end{barticle}
\endbibitem

\bibitem[\protect\citeauthoryear{Ryu and Yin}{2019}]{RyuYin_parallel}
\begin{barticle}
\bauthor{\bsnm{Ryu}, \binits{E.K.}},
\bauthor{\bsnm{Yin}, \binits{W.}}:
\batitle{Proximal-proximal-gradient method}.
\bjtitle{Journal of Computational Mathematics}
\bvolume{37}(\bissue{6}),
\bfpage{778}--\blpage{812}
(\byear{2019})
\end{barticle}
\endbibitem

\bibitem[\protect\citeauthoryear{Condat et~al.}{2023}]{condat}
\begin{barticle}
\bauthor{\bsnm{Condat}, \binits{L.}},
\bauthor{\bsnm{Kitahara}, \binits{D.}},
\bauthor{\bsnm{Contreras}, \binits{A.}},
\bauthor{\bsnm{Hirabayashi}, \binits{A.}}:
\batitle{Proximal splitting algorithms for convex optimization: A tour of
  recent advances, with new twists}.
\bjtitle{SIAM Review}
\bvolume{65}(\bissue{2}),
\bfpage{375}--\blpage{435}
(\byear{2023})
\end{barticle}
\endbibitem

\bibitem[\protect\citeauthoryear{Campoy}{2022}]{Campoy2022}
\begin{barticle}
\bauthor{\bsnm{Campoy}, \binits{R.}}:
\batitle{A product space reformulation with reduced dimension for splitting
  algorithms}.
\bjtitle{Computational Optimization and Applications}
\bvolume{83}(\bissue{1}),
\bfpage{319}--\blpage{348}
(\byear{2022})
\end{barticle}
\endbibitem

\bibitem[\protect\citeauthoryear{Malitsky and
  Tam}{2023}]{MalitskyTam2023resolvent}
\begin{barticle}
\bauthor{\bsnm{Malitsky}, \binits{Y.}},
\bauthor{\bsnm{Tam}, \binits{M.K.}}:
\batitle{Resolvent splitting for sums of monotone operators with minimal
  lifting}.
\bjtitle{Mathematical Programming}
\bvolume{201}(\bissue{1}),
\bfpage{231}--\blpage{262}
(\byear{2023})
\end{barticle}
\endbibitem

\bibitem[\protect\citeauthoryear{Dirk A.~Lorenz and Naldi}{2024}]{lorenz2024}
\begin{barticle}
\bauthor{\bsnm{Dirk A.~Lorenz}, \binits{J.M.}},
\bauthor{\bsnm{Naldi}, \binits{E.}}:
\batitle{The degenerate variable metric proximal point algorithm and adaptive
  stepsizes for primal–dual douglas–rachford}.
\bjtitle{Optimization}
\bvolume{0}(\bissue{0}),
\bfpage{1}--\blpage{27}
(\byear{2024})
\doiurl{10.1080/02331934.2024.2325552}
\end{barticle}
\endbibitem

\bibitem[\protect\citeauthoryear{Naldi}{2024}]{naldi2024thesis}
\begin{botherref}
\oauthor{\bsnm{Naldi}, \binits{E.}}:
Investigating degenerate preconditioners for proximal point algorithms.
PhD thesis,
TU Braunschweig
(2024)
\end{botherref}
\endbibitem

\bibitem[\protect\citeauthoryear{Solodov and Svaiter}{2000}]{solodov2000}
\begin{barticle}
\bauthor{\bsnm{Solodov}, \binits{M.V.}},
\bauthor{\bsnm{Svaiter}, \binits{B.F.}}:
\batitle{An inexact hybrid generalized proximal point algorithm and some new
  results on the theory of bregman functions}.
\bjtitle{Mathematics of Operations Research}
\bvolume{25}(\bissue{2}),
\bfpage{214}--\blpage{230}
(\byear{2000})
\end{barticle}
\endbibitem

\bibitem[\protect\citeauthoryear{Eckstein and Silva}{2013}]{eckstein2013}
\begin{barticle}
\bauthor{\bsnm{Eckstein}, \binits{J.}},
\bauthor{\bsnm{Silva}, \binits{P.J.S.}}:
\batitle{A practical relative error criterion for augmented lagrangians}.
\bjtitle{Mathematical Programming}
\bvolume{141}(\bissue{1}),
\bfpage{319}--\blpage{348}
(\byear{2013})
\doiurl{10.1007/s10107-012-0528-9}
\end{barticle}
\endbibitem

\bibitem[\protect\citeauthoryear{Marques~Alves and Svaiter}{2016}]{AS2016}
\begin{barticle}
\bauthor{\bsnm{Marques~Alves}, \binits{M.}},
\bauthor{\bsnm{Svaiter}, \binits{B.F.}}:
\batitle{A variant of the hybrid proximal extragradient method for solving
  strongly monotone inclusions and its complexity analysis}.
\bjtitle{Journal of Optimization Theory and Applications}
\bvolume{168}(\bissue{1}),
\bfpage{198}--\blpage{215}
(\byear{2016})
\doiurl{10.1007/s10957-015-0792-y}
\end{barticle}
\endbibitem

\bibitem[\protect\citeauthoryear{Alves and Geremia}{2019}]{AlvesGeremia}
\begin{barticle}
\bauthor{\bsnm{Alves}, \binits{M.M.}},
\bauthor{\bsnm{Geremia}, \binits{M.}}:
\batitle{Iteration complexity of an inexact {D}ouglas-{R}achford method and of
  a {D}ouglas-{R}achford-{T}seng's {F-B} four-operator splitting method for
  solving monotone inclusions}.
\bjtitle{Numerical Algorithms}
\bvolume{82}(\bissue{1}),
\bfpage{263}--\blpage{295}
(\byear{2019})
\end{barticle}
\endbibitem

\bibitem[\protect\citeauthoryear{{Svaiter, Benar Fux}}{2019}]{SvaiterInexactDR}
\begin{barticle}
\bauthor{\bsnm{{Svaiter, Benar Fux}}}:
\batitle{A weakly convergent fully inexact {D}ouglas-{R}achford method with
  relative error tolerance}.
\bjtitle{ESAIM: COCV}
\bvolume{25},
\bfpage{57}
(\byear{2019})
\end{barticle}
\endbibitem

\bibitem[\protect\citeauthoryear{Brice{\~n}o-Arias}{2012}]{Briceo2012}
\begin{barticle}
\bauthor{\bsnm{Brice{\~n}o-Arias}, \binits{L.M.}}:
\batitle{Forward-{D}ouglas–{R}achford splitting and forward-partial inverse
  method for solving monotone inclusions}.
\bjtitle{Optimization}
\bvolume{64},
\bfpage{1239}--\blpage{1261}
(\byear{2012})
\end{barticle}
\endbibitem

\bibitem[\protect\citeauthoryear{Raguet et~al.}{2013}]{Raguet2013}
\begin{barticle}
\bauthor{\bsnm{Raguet}, \binits{H.}},
\bauthor{\bsnm{Fadili}, \binits{J.}},
\bauthor{\bsnm{Peyr\'{e}}, \binits{G.}}:
\batitle{A generalized forward-backward splitting}.
\bjtitle{SIAM J. Imaging Sci.}
\bvolume{6}(\bissue{3}),
\bfpage{1199}--\blpage{1226}
(\byear{2013})
\end{barticle}
\endbibitem

\bibitem[\protect\citeauthoryear{Raguet}{2019}]{Raguet2019}
\begin{barticle}
\bauthor{\bsnm{Raguet}, \binits{H.}}:
\batitle{A note on the forward-{D}ouglas--{R}achford splitting for monotone
  inclusion and convex optimization}.
\bjtitle{Optimization Letters}
\bvolume{13}(\bissue{4}),
\bfpage{717}--\blpage{740}
(\byear{2019})
\end{barticle}
\endbibitem

\bibitem[\protect\citeauthoryear{Zong et~al.}{2018}]{ZTC18}
\begin{botherref}
\oauthor{\bsnm{Zong}, \binits{C.}},
\oauthor{\bsnm{Tang}, \binits{Y.}},
\oauthor{\bsnm{Cho}, \binits{Y.J.}}:
Convergence analysis of an inexact three-operator splitting algorithm.
Symmetry
\textbf{10}(11)
(2018)
\doiurl{10.3390/sym10110563}
\end{botherref}
\endbibitem

\bibitem[\protect\citeauthoryear{Aragón-Artacho et~al.}{2024}]{Artacho2024}
\begin{botherref}
\oauthor{\bsnm{Aragón-Artacho}, \binits{F.J.}},
\oauthor{\bsnm{Campoy}, \binits{R.}},
\oauthor{\bsnm{López-Pastor}, \binits{C.}}:
Forward-backward algorithms devised by graphs
(2024).
\url{https://arxiv.org/abs/2406.03309}
\end{botherref}
\endbibitem

\bibitem[\protect\citeauthoryear{Rudin et~al.}{1992}]{rudin1992nonlinear}
\begin{barticle}
\bauthor{\bsnm{Rudin}, \binits{L.I.}},
\bauthor{\bsnm{Osher}, \binits{S.}},
\bauthor{\bsnm{Fatemi}, \binits{E.}}:
\batitle{Nonlinear total variation based noise removal algorithms}.
\bjtitle{Physica D: nonlinear phenomena}
\bvolume{60}(\bissue{1-4}),
\bfpage{259}--\blpage{268}
(\byear{1992})
\end{barticle}
\endbibitem

\bibitem[\protect\citeauthoryear{Bredies and
  Lorenz}{2018}]{bredies2018mathematical}
\begin{bbook}
\bauthor{\bsnm{Bredies}, \binits{K.}},
\bauthor{\bsnm{Lorenz}, \binits{D.}}:
\bbtitle{Mathematical Image Processing}.
\bsertitle{Applied and Numerical Harmonic Analysis},
p. \bfpage{473}.
\bpublisher{Birkh\"{a}user/Springer},
\blocation{Cham}
(\byear{2018}).
\doiurl{10.1007/978-3-030-01458-2}
\end{bbook}
\endbibitem

\bibitem[\protect\citeauthoryear{Condat}{2013}]{condat2013primal}
\begin{barticle}
\bauthor{\bsnm{Condat}, \binits{L.}}:
\batitle{A primal--dual splitting method for convex optimization involving
  lipschitzian, proximable and linear composite terms}.
\bjtitle{Journal of optimization theory and applications}
\bvolume{158}(\bissue{2}),
\bfpage{460}--\blpage{479}
(\byear{2013})
\end{barticle}
\endbibitem

\bibitem[\protect\citeauthoryear{V{\~u}}{2013}]{vu2013splitting}
\begin{barticle}
\bauthor{\bsnm{V{\~u}}, \binits{B.C.}}:
\batitle{A splitting algorithm for dual monotone inclusions involving
  cocoercive operators}.
\bjtitle{Advances in Computational Mathematics}
\bvolume{38},
\bfpage{667}--\blpage{681}
(\byear{2013})
\end{barticle}
\endbibitem

\bibitem[\protect\citeauthoryear{Werlberger
  et~al.}{2009}]{werlberger2009anisotropic}
\begin{bchapter}
\bauthor{\bsnm{Werlberger}, \binits{M.}},
\bauthor{\bsnm{Trobin}, \binits{W.}},
\bauthor{\bsnm{Pock}, \binits{T.}},
\bauthor{\bsnm{Wedel}, \binits{A.}},
\bauthor{\bsnm{Cremers}, \binits{D.}},
\bauthor{\bsnm{Bischof}, \binits{H.}}:
\bctitle{Anisotropic {Hu}ber-{L}1 optical flow}.
In: \beditor{\bsnm{Cavallaro}, \binits{A.}},
\beditor{\bsnm{Prince}, \binits{S.}},
\beditor{\bsnm{Alexander}, \binits{D.C.}} (eds.)
\bbtitle{British Machine Vision Conference, {BMVC} 2009, London, UK, September
  7-10, 2009. Proceedings},
pp. \bfpage{1}--\blpage{11}
(\byear{2009})
\end{bchapter}
\endbibitem

\bibitem[\protect\citeauthoryear{Hinterm{\"u}ller and
  Wu}{2013}]{hintermuller2013nonconvex}
\begin{barticle}
\bauthor{\bsnm{Hinterm{\"u}ller}, \binits{M.}},
\bauthor{\bsnm{Wu}, \binits{T.}}:
\batitle{Nonconvex {$TV^q$}-models in image restoration: Analysis and a
  trust-region regularization--based superlinearly convergent solver}.
\bjtitle{SIAM Journal on Imaging Sciences}
\bvolume{6}(\bissue{3}),
\bfpage{1385}--\blpage{1415}
(\byear{2013})
\end{barticle}
\endbibitem

\bibitem[\protect\citeauthoryear{Tseng}{1991}]{Tseng_FB}
\begin{barticle}
\bauthor{\bsnm{Tseng}, \binits{P.}}:
\batitle{Applications of a splitting algorithm to decomposition in convex
  programming and variational inequalities}.
\bjtitle{SIAM Journal on Control and Optimization}
\bvolume{29}(\bissue{1}),
\bfpage{119}--\blpage{138}
(\byear{1991})
\doiurl{10.1137/0329006}
\end{barticle}
\endbibitem

\bibitem[\protect\citeauthoryear{Chen and Rockafellar}{1997}]{Rockafellar_FB}
\begin{barticle}
\bauthor{\bsnm{Chen}, \binits{G.H.-G.}},
\bauthor{\bsnm{Rockafellar}, \binits{R.T.}}:
\batitle{Convergence rates in forward--backward splitting}.
\bjtitle{SIAM Journal on Optimization}
\bvolume{7}(\bissue{2}),
\bfpage{421}--\blpage{444}
(\byear{1997})
\end{barticle}
\endbibitem

\bibitem[\protect\citeauthoryear{Combettes and Wajs}{2005}]{Combettes_FB}
\begin{barticle}
\bauthor{\bsnm{Combettes}, \binits{P.L.}},
\bauthor{\bsnm{Wajs}, \binits{V.R.}}:
\batitle{Signal recovery by proximal forward-backward splitting}.
\bjtitle{Multiscale Modeling \& Simulation}
\bvolume{4}(\bissue{4}),
\bfpage{1168}--\blpage{1200}
(\byear{2005})
\doiurl{10.1137/050626090}
\end{barticle}
\endbibitem

\bibitem[\protect\citeauthoryear{Attouch et~al.}{2011}]{Attouch_FB}
\begin{barticle}
\bauthor{\bsnm{Attouch}, \binits{H.}},
\bauthor{\bsnm{Bolte}, \binits{J.}},
\bauthor{\bsnm{Svaiter}, \binits{B.F.}}:
\batitle{{Convergence of descent methods for semi-algebraic and tame problems:
  proximal algorithms, forward-backward splitting, and regularized
  Gauss--Seidel methods}}.
\bjtitle{{Mathematical Programming, Series A}}
\bvolume{137}(\bissue{1}),
\bfpage{91}--\blpage{124}
(\byear{2011})
\doiurl{10.1007/s10107-011-0484-9}
\end{barticle}
\endbibitem

\end{thebibliography}

\end{document}